\numberwithin{equation}{section}
\newtheoremstyle{thmlemcorr}{10pt}{10pt}{\itshape}{}{\bfseries}{.}{10pt}{{\thmname{#1}\thmnumber{ #2}\thmnote{ (#3)}}}
\newtheoremstyle{thmlemcorr*}{10pt}{10pt}{\itshape}{}{\bfseries}{.}\newline{{\thmname{#1}\thmnumber{ #2}\thmnote{ (#3)}}}
\newtheoremstyle{remexample}{10pt}{10pt}{}{}{\bfseries}{.}{10pt}{{\thmname{#1}\thmnumber{ #2}\thmnote{ (#3)}}}
\newtheoremstyle{ass}{10pt}{10pt}{}{}{\bfseries}{.}{10pt}{{\thmname{#1}\thmnumber{ A#2}\thmnote{ (#3)}}}
\theoremstyle{thmlemcorr}
\newtheorem{theorem}{Theorem}
\numberwithin{theorem}{section}
\newtheorem{corollary}[theorem]{Corollary}
\newtheorem{proposition}[theorem]{Proposition}
\theoremstyle{thmlemcorr*}
\newtheorem{theorem*}{Theorem}
\newtheorem{lemma*}[theorem]{Lemma}
\newtheorem{corollary*}[theorem]{Corollary}
\newtheorem{proposition*}[theorem]{Proposition}
\newtheorem{problem*}[theorem]{Problem}
\newtheorem{conjecture*}[theorem]{Conjecture}
\newtheorem{definition*}[theorem]{Definition}
\theoremstyle{remexample}
\newtheorem{remark}[theorem]{Remark}
\theoremstyle{ass}
\newcommand{\Nbb}{\mathbb{N}}
\newcommand{\Rbb}{\mathbb{R}}
\newcommand{\Tbb}{\mathbb{T}}
\newcommand{\T}{\mathbb{T}}
\DeclareMathOperator{\diverg}{div}
\DeclareMathOperator{\Div}{div}
\DeclareMathOperator{\dist}{dist}
\DeclareMathOperator{\supp}{supp}
\newcommand{\norm}[1]{\|#1\|}
\newcommand{\proofstep}[1]{\textit{#1}}
\newcommand{\vr}{\varrho}
\newcommand{\Grad}{\nabla}
\newcommand{\vu}{u}
\newcommand{\ep}{\varepsilon}
\newcommand{\DC}{C^\infty_c}
\newcommand{\intT}[1]{\int_{\T^d}  #1  \ \dx}
\newcommand{\dx}{ dx}
\newcommand{\dt}{dt}
\def\XXint#1#2#3{{\setbox0=\hbox{$#1{#2#3}{\int}$}
\vcenter{\hbox{$#2#3$}}\kern-.5\wd0}}
\renewcommand{\epsilon}{\varepsilon}
\renewcommand{\phi}{\varphi}
\begin{document}

%% TITLE MATTERS

\title[Energy Conservation for Compressible Euler]{Regularity and Energy Conservation for the Compressible Euler Equations}

\author{Eduard Feireisl}
\address{\textit{Eduard Feireisl:} Institute of Mathematics of the Academy of Sciences of the Czech Republic, \v{Z}itn\'a 25, CZ-115 67 Praha 1, Czech Republic }
\email{feireisl@math.cas.cz}

\author{Piotr Gwiazda}
\address{\textit{Piotr Gwiazda:}  Institute of Applied Mathematics and Mechanics, University of Warsaw, Banacha 2, 02-097 Warszawa, Poland}
\email{pgwiazda@mimuw.edu.pl}

\author{Agnieszka \'Swierczewska-Gwiazda}
\address{\textit{Agnieszka \'{S}wierczewska-Gwiazda:} Institute of Applied Mathematics and Mechanics, University of Warsaw, Banacha 2, 02-097 Warszawa, Poland }
\email{aswiercz@mimuw.edu.pl}

\author{Emil Wiedemann}
\address{\textit{Emil Wiedemann:} Hausdorff Center for Mathematics and Mathematical Institute, Universit\"{a}t Bonn, Endenicher Allee 60, 53115 Bonn, Germany}
\email{emil.wiedemann@hcm.uni-bonn.de}

\begin{abstract}
We give sufficient conditions on the regularity of solutions to the inhomogeneous incompressible Euler and the compressible isentropic Euler systems in order for the energy to be conserved. Our strategy relies on commutator estimates similar to those employed by P.\ Constantin et al.\ for the homogeneous incompressible Euler equations.
\end{abstract}
%\vspace{4pt}

%\noindent\textsc{MSC (2010): XXXXX (primary);}

%\noindent\textsc{Keywords:}

%\vspace{4pt}

%\noindent\textsc{Date:} \today{}.
%\end{abstract}

%% PDF MATTERS

%% START OF CONTENT

\maketitle

%\hrule\vspace{1pt}
%\begin{center}
%\textbf{\large
%DRAFT (version of \today)}
%\end{center}
%\hrule
%\vspace{10mm}

%\setcounter{tocdepth}{1}
%\tableofcontents

%\newpage

\section{Introduction}

We study in this paper the relationship between regularity and the conservation or dissipation of energy for two models of fluid dynamics: We consider the \emph{inhomogeneous incompressible Euler equations},
\begin{equation}\label{inhomintro}
\begin{aligned}
\partial_t(\rho u)+\diverg(\rho u\otimes u)+\nabla p&=0,\\
\partial_t\rho+\diverg(\rho u)&=0,\\
\diverg u&=0,
\end{aligned}
\end{equation}
as well as the \emph{compressible isentropic Euler equations},
\begin{equation}\label{compressibleintro}
\begin{aligned}
\partial_t(\rho u)+\diverg(\rho u\otimes u)+\nabla p(\rho)&=0,\\
\partial_t\rho+\diverg{\rho u}&=0.
\end{aligned}
\end{equation}
In both systems, $\rho\geq0$ is the scalar density of a fluid, $u$ is its velocity, and $p$ is the scalar pressure. Note however that in the incompressible case $p$ is an unknown, whereas in the compressible system it is a constitutively given function of the density. We consider these equations in any space dimension (it makes sense to study~\eqref{inhomintro} in two or more space dimensions and~\eqref{compressibleintro} in one or more space dimensions). While the compressible isentropic Euler equations are a well-accepted model for compressible flows, the inhomogeneous incompressible Euler equations have received somewhat less attention than its homogeneous special case (when $\rho\equiv1$ in~\eqref{inhomintro}). Nevertheless, a number of results on~\eqref{inhomintro} are available, among them~\cite{Ma1976, DaFa2011, Da2010, TrSh2016}.

Both systems have energies which are at least formally conserved. For~\eqref{inhomintro} the energy density is given by $\frac{1}{2}\rho|u|^2$ and for~\eqref{compressibleintro} it is $\frac{1}{2}\rho|u|^2+P(\rho)$, where $P$ is the \emph{pressure potential} associated with $p$. In compressible fluid dynamics, it is well-known that shocks may form, giving rise to energy dissipation. For incompressible fluids, the situation is more subtle, but it had been expected for a long time that the energy in fully turbulent flow should dissipate with a rate independent of viscosity and, accordingly, there should exist weak solutions of the incompressible Euler equations which do not conserve energy. Such weak solutions were eventually constructed by Scheffer~\cite{scheffer} and Shnirelman~\cite{shnirel}.

A common feature of these energy-dissipating solutions is that they necessarily exhibit a certain degree of irregularity. The question therefore is \emph{how much regularity is needed to guarantee the conservation of energy}. In the context of incompressible turbulence, this question is the subject of a famous conjecture of Onsager~\cite{On1949}, according to which energy should be conserved if the solution is H\"older continuous with exponent greater than $1/3$, while solutions with less regularity possibly dissipate energy. The first part of this assertion was proved (for the homogeneous incompressible Euler equations) in~\cite{constetiti, Ey1994, ChCoFrSh2008}, while significant progress has recently been made in constructing energy-dissipating solutions slightly below the Onsager regularity (the currently best available results are~\cite{BuDLIsSz2015, BuDLSz2015}).

We give in this paper sufficient conditions on the regularity of $\rho$ and $u$ to ensure the conservation of energy. Our approach relies on the idea of Constantin et al.~\cite{constetiti} to use suitable commutator estimates. Accordingly our regularity assumptions are stated in terms of Besov spaces $B_p^{\alpha,\infty}$ similarly to~\cite{constetiti}. However, since we have now two unknowns $\rho$ and $u$, it is possible to ``trade" regularity between the density and the velocity. In particular, if the velocity is sufficiently regular, then the energy will be conserved even if the density is only of bounded variation.

The term $\partial_t(\rho u)$ is nonlinear in $(\rho,u)$ and therefore requires a commutator estimate. In turn this makes it necessary to make an assumption on Besov regularity also \emph{in time} (for the homogeneous incompressible Euler system this is not needed). One may circumvent this time regularity assumption, as was done very recently in~\cite{TrSh2016} for the system~\eqref{inhomintro}, by formulating the equations in terms of the density and the momentum $m=\rho u$ and obtaining the energy conservation by multiplication of the momentum equation by $(\rho u)^\epsilon/\rho^\epsilon$ instead of $u^\epsilon$, where the index $\epsilon$ indicates a suitable regularization. Then however, $(\rho u)^\epsilon/\rho^\epsilon$ is no longer divergence-free, which requires a commutator estimate involving the pressure; as a consequence, a regularity assumption on the pressure has to be made. We choose to rather assume some time regularity, as this approach allows us to handle vacuum states (meaning $\rho=0$).

Theorems~\ref{inhomonsager} and~\ref{compressibleonsager} thus give energy conservation for~\eqref{inhomintro} and~\eqref{compressibleintro}, respectively, under the assumption of Besov regularity in time and space. Theorem~\ref{compressibleonsagerBV} states energy conservation for~\eqref{compressibleintro} with no assumption on regularity in time, but under the assumption that $\rho, u\in BV\cap C$ in space. Again, shocks provide an example that this result is optimal in the sense that the continuity assumption cannot be dropped. For Theorem~\ref{compressibleonsagerBV} we make use of a specific time regularization that allows us to deduce some time regularity from the space regularity; such an argument was already used in~\cite{FrMaRu2010} and later in~\cite{Wr2013, FeLiMa2015}.

Let us remark that statements similar to Theorems~\ref{inhomonsager} and~\ref{compressibleonsager} could also be proved for the \emph{Euler-Boussinesq equations} 
\begin{equation*}
\begin{aligned}
\partial_t u+\diverg(u\otimes u)+\nabla p&=\theta f,\\
\partial_t\theta+\diverg(\theta u)&=0,\\
\diverg u&=0
\end{aligned}
\end{equation*}
(without having to assume time regularity) and for Navier-Stokes systems (cf.~\cite{TrSh2016}).\\

\textbf{Acknowledgments.} This work was done while the authors were participating in the Research in Pairs Program at Mathematisches Forschungsinstitut Oberwolfach. They warmly thank the Institute for its kind hospitality and the excellent research environment it provided. 

P.G. and A.\'S.-G. received support from the National Science Centre
(Poland), 2015/18/M/ST1/00075. 

The work of E.F. has received funding from the European Research Council under the European Union's Seventh Framework Programme (FP7/2007-2013)/ ERC Grant Agreement 320078. The Institute of Mathematics of the Academy of Sciences of the Czech
Republic is supported by RVO:67985840.

\section{Besov Spaces}\label{besov}
In this section we briefly discuss some properties of the Besov space $B_p^{\alpha,\infty}(\Omega)$, where $\Omega=(0,T)\times\Tbb^d$ or $\Omega=\Tbb^d$. The said Besov space comprises those functions $w$ for which the norm
\begin{equation}\label{besovshift}
\norm{w}_{B_p^{\alpha,\infty}(\Omega)}:=\norm{w}_{L^p(\Omega)}+\sup_{\xi\in\Omega}\frac{\norm{w(\cdot+\xi)-w}_{L^p(\Omega\cap(\Omega-\xi))}}{|\xi|^\alpha}
\end{equation}
is finite (here $\Omega-\xi=\{x-\xi: x\in\Omega\}$).

Let $\eta\in C_c^\infty(\Rbb^N)$ for $N=1+d$ or $N=d$ (according to the choice of $\Omega$) be a standard mollifying kernel and set
\begin{equation*}
\eta^\epsilon(x)=\frac{1}{\epsilon^{N}}\eta\left(\frac{x}{\epsilon}\right).
\end{equation*}
With the notation $w^\epsilon=\eta^\epsilon*w$ for any function $w$, $w^\epsilon$ is well-defined on $\Omega^\epsilon=\{x\in\Omega: \dist(x,\partial\Omega)>\epsilon\}$.

It is then easy to check that the definition of the Besov spaces implies
\begin{equation}\label{besoveps}
\norm{w^\epsilon-w}_{L^p(\Omega)}\leq C\epsilon^\alpha\norm{w}_{B_p^{\alpha,\infty}(\Omega)}
\end{equation}
and
\begin{equation}\label{besovepsgradient}
\norm{\nabla w^\epsilon}_{L^p(\Omega)}\leq C\epsilon^{\alpha-1}\norm{w}_{B_p^{\alpha,\infty}(\Omega)}.
\end{equation}
Moreover, it is easy to see that $(B_p^{\alpha,\infty}\cap L^\infty)(\Omega)$ is an algebra, i.e.\ the product of two functions in this space is again contained in the space.

Let $BV((0,T)\times\Tbb^d)$ denote the space of functions of bounded variation.
\begin{proposition}\label{BVembedding}
$(BV\cap L^\infty)(\Omega)\subset B_p^{\frac{1}{p},\infty}(\Omega)$ for every $p\in[1,+\infty]$.
\end{proposition}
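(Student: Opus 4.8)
The plan is to reduce the statement to the classical characterization of $BV$ through $L^1$-translates, and then to raise the integrability exponent from $1$ to $p$ by exploiting the $L^\infty$ bound. Inspecting the Besov seminorm in~\eqref{besovshift} with $\alpha=1/p$, it suffices to produce a constant $C$, depending only on $\norm{w}_{L^\infty}$ and the total variation of $w$, such that the translate satisfies $\norm{w(\cdot+\xi)-w}_{L^p(\Omega\cap(\Omega-\xi))}\leq C\abs{\xi}^{1/p}$ uniformly in $\xi$; the $L^p$-part of the norm is then controlled trivially by $\abs{\Omega}^{1/p}\norm{w}_{L^\infty}$ since $\Omega$ has finite measure.

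First I would establish the $L^1$-translate estimate: for $w\in BV(\Omega)$,
\[
\norm{w(\cdot+\xi)-w}_{L^1(\Omega\cap(\Omega-\xi))}\leq \abs{\xi}\,\abs{Dw}(\Omega),
\]
where $\abs{Dw}(\Omega)$ denotes the total variation. For smooth $w$ this is immediate from the fundamental theorem of calculus, writing $w(x+\xi)-w(x)=\int_0^1\nabla w(x+t\xi)\cdot\xi\,dt$, taking absolute values, integrating over $\Omega\cap(\Omega-\xi)$, and applying Fubini to obtain the bound $\abs{\xi}\norm{\nabla w}_{L^1}$. For general $w\in BV$ I would mollify, apply the smooth estimate to $w^\epsilon$, and pass to the limit using the $L^1$-convergence $w^\epsilon\to w$ together with $\nabla w^\epsilon=\eta^\epsilon*Dw$, which yields $\norm{\nabla w^\epsilon}_{L^1}\leq\abs{Dw}(\Omega)+o(1)$. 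The restriction of the integration domain to $\Omega\cap(\Omega-\xi)$ is exactly what makes this computation legitimate on the strip $(0,T)\times\T^d$ without boundary contributions, as it compares only values at points that both lie in $\Omega$; on $\T^d$ there is no boundary at all.

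Next I would interpolate between $L^1$ and $L^\infty$ at the pointwise level. Since $\abs{w(x+\xi)-w(x)}\leq 2\norm{w}_{L^\infty}$, we have $\abs{w(x+\xi)-w(x)}^{p}\leq(2\norm{w}_{L^\infty})^{p-1}\abs{w(x+\xi)-w(x)}$, hence
\[
\norm{w(\cdot+\xi)-w}_{L^p}^p\leq(2\norm{w}_{L^\infty})^{p-1}\,\norm{w(\cdot+\xi)-w}_{L^1}.
\]
Combining with the $L^1$-bound from the previous step and taking $p$-th roots gives $\norm{w(\cdot+\xi)-w}_{L^p}\leq(2\norm{w}_{L^\infty})^{(p-1)/p}(\abs{Dw}(\Omega))^{1/p}\abs{\xi}^{1/p}$, so the Besov seminorm is finite after dividing by $\abs{\xi}^{1/p}$ and taking the supremum. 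The endpoint $p=\infty$ corresponds to $\alpha=0$ and is immediate, since $\norm{w(\cdot+\xi)-w}_{L^\infty}\leq2\norm{w}_{L^\infty}$ already bounds the seminorm.

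The main obstacle is the passage to the limit in the $L^1$-translate estimate for genuinely non-smooth $BV$ functions on a domain with boundary. The delicate point is controlling $\norm{\nabla w^\epsilon}_{L^1}$ by $\abs{Dw}(\Omega)$ up to a vanishing error, i.e.\ ensuring that mollification does not create spurious variation near $\partial\Omega$; working on the restricted domain $\Omega\cap(\Omega-\xi)$ and using that the torus directions are boundary-free keeps this under control, while the relation $\nabla(\eta^\epsilon*w)=\eta^\epsilon*Dw$ and the contraction property of convolution against the probability kernel $\eta^\epsilon$ supply the required upper bound.
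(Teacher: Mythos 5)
Your proof is correct and follows essentially the same route as the paper's: establish the $L^1$-translate bound $\norm{w(\cdot+\xi)-w}_{L^1(\Omega\cap(\Omega-\xi))}\leq|\xi|\,|Dw|(\Omega)$ by smooth approximation, then interpolate against the trivial $L^\infty$ bound to raise the exponent to $p$. The only cosmetic difference is the approximation mechanism: you mollify directly and use the contraction $\norm{\eta^\epsilon*Dw}_{L^1}\leq|Dw|(\Omega)$ (needing only this one-sided bound), whereas the paper takes a sequence of smooth functions converging strictly in $BV$, so that $TV(w_n)\to TV(w)$; both limit passages are equally routine.
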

\begin{proof}
Let $w\in(BV\cap L^\infty)(\Omega)$. First observe that, trivially,
\begin{equation*}
\norm{w(\cdot+\xi)-w}_{L^\infty(\Omega\cap(\Omega-\xi))}\leq 2\norm{w}_{L^\infty}.
\end{equation*}
Next, let $(w_n)$ be a sequence of smooth functions that converge strictly to $w$ in $BV(\Omega)$. This means that
$w_n\to w$ in $L^1(\Omega)$, $\nabla w_n \to \nabla w$ weakly-(*) in $\mathcal{M}(\Omega)$, and, in addition, $TV(w_n)\to TV(w)$, where $TV$ denotes the total variation. For each $n$, $w_n$ is in $W^{1,1}(\Omega)$ and so we can estimate
\begin{equation*}
\norm{w_n(\cdot+\xi)-w_n}_{L^1(\Omega\cap(\Omega-\xi))}\leq |\xi|\norm{Dw_n}_{L^1}=|\xi|TV(w_n).
\end{equation*}
The left hand side converges to $\norm{w(\cdot+\xi)-w}_{L^1}$ because $w_n\to w$ in $L^1$, whereas the right hand side converges to $|\xi|TV(w)$ by the strict convergence.

Finally we interpolate between the $L^1$ and the $L^{\infty}$ estimate to obtain
\begin{equation*}
\norm{w(\cdot+\xi)-w}_{L^p(\Omega\cap(\Omega-\xi))}\leq C\norm{w}_{BV}^{1/p}\norm{w}_{L^\infty}^{1-1/p}|\xi|^{1/p}\leq C\norm{w}_{BV\cap L^\infty}|\xi|^{1/p}.
\end{equation*}
\end{proof}

\section{An Onsager-type Statement for the Inhomogeneous Incompressible Euler Equations}
Consider the inhomogeneous incompressible Euler system on $(0,T)\times\Tbb^d$:
\begin{equation}\label{inhom}
\begin{aligned}
\partial_t(\rho u)+\diverg(\rho u\otimes u)+\nabla p&=0,\\
\partial_t\rho+\diverg(\rho u)&=0,\\
\diverg u&=0.
\end{aligned}
\end{equation}

\begin{theorem}\label{inhomonsager}
Let $\rho$, $u$, $p$ be a solution of~\eqref{inhom} in the sense of distributions. Assume
\begin{equation}\label{besovhypo}
u\in B_p^{\alpha,\infty}((0,T)\times\Tbb^d),\hspace{0.3cm}\rho, \rho u\in B_q^{\beta,\infty}((0,T)\times\Tbb^d),\hspace{0.3cm}p\in L^{p^*}_{loc}((0,T)\times\Tbb^d)
\end{equation}
for some $1\leq p,q\leq\infty$ and $0\leq\alpha,\beta\leq1$ such that
\begin{equation}\label{exponenthypo}
\frac{2}{p}+\frac{1}{q}=1,\hspace{0.3cm}\frac{1}{p}+\frac{1}{p^*}=1,\hspace{0.3cm}2\alpha+\beta>1.
\end{equation}
Then the energy is locally conserved, i.e.
\begin{equation}\label{localenergy}
\partial_t\left(\frac{1}{2}\rho|u|^2\right)+\diverg\left[\left(\frac{1}{2}\rho|u|^2+p\right)u\right]=0
\end{equation}
in the sense of distributions on $(0,T)\times\Tbb^d$.
\end{theorem}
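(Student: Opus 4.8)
The plan is to adapt the commutator method of Constantin, E and Titi to the present two-field setting. Write $w^\epsilon := \eta^\epsilon * w$ and abbreviate the momentum by $m := \rho u$. Since both equations in~\eqref{inhom} hold in the sense of distributions, their mollifications
\[
\partial_t m^\epsilon + \diverg(m\otimes u)^\epsilon + \nabla p^\epsilon = 0, \qquad \partial_t\rho^\epsilon + \diverg m^\epsilon = 0
\]
hold classically on the interior $\Omega^\epsilon$; as $\Tbb^d$ has no boundary, $\Omega^\epsilon=(\epsilon,T-\epsilon)\times\Tbb^d$, so for a fixed $\phi\in C_c^\infty((0,T)\times\Tbb^d)$ and all sufficiently small $\epsilon$ every integration by parts below is free of boundary terms. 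Mollification commutes with differentiation, hence $\diverg u^\epsilon=(\diverg u)^\epsilon=0$, which I use repeatedly. The strategy is to take the scalar product of the mollified momentum equation with $u^\epsilon$, multiply the mollified continuity equation by $\tfrac12|u^\epsilon|^2$, subtract, and test against $\phi$. For smooth fields this combination is an exact rearrangement of~\eqref{localenergy}; for our solutions it will reproduce~\eqref{localenergy} in the limit $\epsilon\to0$ up to commutator errors.

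After integrating by parts in $\phi$, the identity splits into three groups. The pressure contributes $\int\diverg(p^\epsilon u^\epsilon)\phi=-\int p^\epsilon u^\epsilon\cdot\nabla\phi$, converging to $-\int pu\cdot\nabla\phi$ since $p^\epsilon\to p$ in $L^{p^*}_{\loc}$, $u^\epsilon\to u$ in $L^p$, and $\tfrac1p+\tfrac1{p^*}=1$. The flux part of the convective and continuity terms converges, using $\tfrac2p+\tfrac1q=1$, to $-\tfrac12\int\rho|u|^2\,u\cdot\nabla\phi$ (one uses $(m_i u_k)^\epsilon u^\epsilon_i\to\rho|u|^2 u_k$ and $\tfrac12|u^\epsilon|^2 m^\epsilon\to\tfrac12\rho|u|^2 u$ in $L^1_{\loc}$). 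Finally, integrating the time terms by parts gives
\[
\int\Bigl[u^\epsilon\cdot\partial_t m^\epsilon - \tfrac12|u^\epsilon|^2\partial_t\rho^\epsilon\Bigr]\phi = -\int (m^\epsilon-\rho^\epsilon u^\epsilon)\cdot\partial_t u^\epsilon\,\phi - \int m^\epsilon\cdot u^\epsilon\,\partial_t\phi + \int\tfrac12\rho^\epsilon|u^\epsilon|^2\,\partial_t\phi,
\]
whose last two terms converge to $-\tfrac12\int\rho|u|^2\partial_t\phi$. Collecting the limits yields precisely the weak form of~\eqref{localenergy}, provided the two remainders—the convective one and $-\int(m^\epsilon-\rho^\epsilon u^\epsilon)\cdot\partial_t u^\epsilon\,\phi$—vanish.

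Both remainders are controlled by the Constantin--E--Titi commutator identity
\[
(fg)^\epsilon - f^\epsilon g^\epsilon = \int\eta^\epsilon(y)\bigl(f(\cdot-y)-f\bigr)\bigl(g(\cdot-y)-g\bigr)\,dy - (f^\epsilon-f)(g^\epsilon-g),
\]
applied with $g=u\in B_p^{\alpha,\infty}$ and $f$ one of the $B_q^{\beta,\infty}$ fields $\rho$ or $m=\rho u$. Estimating each increment by the Besov seminorm, $\norm{u(\cdot-y)-u}_{L^p}\le|y|^\alpha\norm{u}_{B_p^{\alpha,\infty}}$ and analogously for $f$ with exponent $\beta$, and using~\eqref{besoveps} for the second summand, yields $\norm{(fg)^\epsilon-f^\epsilon g^\epsilon}_{L^r}\le C\epsilon^{\alpha+\beta}$ with $\tfrac1r=\tfrac1p+\tfrac1q$. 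Now the convective remainder, after one integration by parts, equals $-\int\bigl[(m_i u_k)^\epsilon - m^\epsilon_k u^\epsilon_i\bigr]\partial_k u^\epsilon_i\,\phi$; since $m\otimes u=\rho\,u\otimes u$ is symmetric we have $m_i u_k=m_k u_i$ pointwise, so the bracket is \emph{exactly} the commutator $(m_k u_i)^\epsilon-m^\epsilon_k u^\epsilon_i$ to which the bound applies. Pairing it with $\nabla u^\epsilon$, estimated through~\eqref{besovepsgradient} by $\norm{\nabla u^\epsilon}_{L^p}\le C\epsilon^{\alpha-1}$, and invoking the Hölder balance $\tfrac1r+\tfrac1p=\tfrac2p+\tfrac1q=1$, makes this term $\BigO(\epsilon^{2\alpha+\beta-1})\to0$, which is where $2\alpha+\beta>1$ is used.

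The main obstacle is the time term, and it is here that regularity in time is indispensable. The quantity $m^\epsilon-\rho^\epsilon u^\epsilon=(\rho u)^\epsilon-\rho^\epsilon u^\epsilon$ is again a commutator of $\rho\in B_q^{\beta,\infty}$ and $u\in B_p^{\alpha,\infty}$, hence of size $\BigO(\epsilon^{\alpha+\beta})$ in $L^r$; but it is multiplied by $\partial_t u^\epsilon$, which is genuinely of order $\epsilon^{\alpha-1}$ and can only be estimated by $\norm{\partial_t u^\epsilon}_{L^p}\le C\epsilon^{\alpha-1}\norm{u}_{B_p^{\alpha,\infty}}$ \emph{because the Besov norm in~\eqref{besovhypo} is taken in space and time}. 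With the same balance $\tfrac2p+\tfrac1q=1$ this remainder is once more $\BigO(\epsilon^{2\alpha+\beta-1})\to0$. Since $\partial_t(\rho u)$ cannot be rewritten as $\rho\,\partial_t u$ without dividing by $\rho$—illegitimate at vacuum—this time commutator, absent in the homogeneous theory, is the essential new point; letting $\epsilon\to0$ then gives~\eqref{localenergy}.
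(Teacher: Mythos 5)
Your proposal is correct and follows essentially the same route as the paper: mollify both equations, test the momentum equation with $\phi u^\epsilon$ (using the mollified continuity equation to handle $\partial_t\rho^\epsilon$), reduce everything to the Constantin--E--Titi commutator identity, and close the estimates with the space-time Besov bounds \eqref{besoveps}--\eqref{besovepsgradient} under the balance $\tfrac2p+\tfrac1q=1$, $2\alpha+\beta>1$. The only differences are organizational—you integrate by parts first so the commutators appear directly paired with $\partial_t u^\epsilon$ and $\nabla u^\epsilon$, and you invoke the symmetry $m_iu_k=m_ku_i$ to identify the convective commutator—whereas the paper packages the same quantities as remainders $R_1^\epsilon$, $R_2^\epsilon$ at the level of the equation before estimating them in the identical way.
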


\begin{remark}\label{inhomremark}
\begin{enumerate}
\item If $\rho\equiv1$, then~\eqref{inhom} reduces to the homogeneous incompressible Euler equations, and the choice $p=q=3$ and $\alpha=\beta$ yields the classical result of Constantin et al.~\cite{constetiti} (except for the time regularity, which can be relaxed in this case).
\item It may seem unnatural to impose regularity requirements on $\rho$, $u$, and $\rho u$ at the same time. By the fact that the spaces $B_p^{\alpha,\infty}\cap L^\infty$ are algebras, however, we may replace the assumptions~\eqref{besovhypo} by
\begin{equation*}
u\in (B_p^{\alpha,\infty}\cap L^\infty)((0,T)\times\Tbb^d),\hspace{0.3cm}\rho\in (B_q^{\beta,\infty}\cap L^\infty)((0,T)\times\Tbb^d),\hspace{0.3cm}p\in L^{p^*}_{loc}((0,T)\times\Tbb^d)
\end{equation*}
with exponents that satisfy~\eqref{exponenthypo} and, in addition, $p\geq q$ and $\alpha\geq\beta$ (so that $B_p^{\alpha,\infty}\subset B_q^{\beta,\infty}$).
\item The hypothesis concerning integrability of the pressure may be completely omitted as soon as we are interested only in the total energy balance
\[
\frac{{\rm d}}{{\rm d}t} E(t) = 0 \ \mbox{in the sense of distributions in} \ (0,T),\ \mbox{where} \ E(t) = \frac{1}{2} \int_{\Tbb^d} \rho |u|^2\ \dx.
\]

\end{enumerate}
\end{remark}

\begin{proof}
We follow the strategy of~\cite{constetiti} and mollify the momentum equation in time and space (with a kernel and notation as in Section~\ref{besov}):
\begin{equation*}
\partial_t(\rho u)^\epsilon+\diverg(\rho u\otimes u)^\epsilon+\nabla p^\epsilon=0.
\end{equation*}
Let $\phi\in C_c^\infty((0,T)\times\Tbb^d)$ be a test function. Multiplication with $\phi u^\epsilon$ and integration in time and space gives
\begin{equation}\label{mollifiedequation}
\int_0^T\int_{\Tbb^d}\partial_t(\rho u)^\epsilon\cdot \phi u^\epsilon dxdt+\int_0^T\int_{\Tbb^d}\diverg(\rho u\otimes u)^\epsilon\cdot \phi u^\epsilon dxdt+\int_0^T\int_{\Tbb^d}\phi u^\epsilon\cdot\nabla p^\epsilon dxdt=0.
\end{equation}
Here we take $\epsilon>0$ small enough so that $\supp\phi\subset (\epsilon,T-\epsilon)\times\Tbb^d$. We can rewrite this equality, using appropriate commutators, as
\begin{equation}\label{smoothenergy}
\begin{aligned}
\int_0^T\int_{\Tbb^d}\partial_t(\rho^\epsilon u^\epsilon)\cdot \phi u^\epsilon& dxdt+\int_0^T\int_{\Tbb^d}\diverg((\rho u)^\epsilon\otimes u^\epsilon)\cdot \phi u^\epsilon dxdt\\
+&\int_0^T\int_{\Tbb^d}\phi u^\epsilon\cdot\nabla p^\epsilon dxdt=R_1^\epsilon+R_2^\epsilon,
\end{aligned}
\end{equation}
where
\begin{equation*}
R_1^\epsilon=\int_0^T\int_{\Tbb^d}\partial_t\left[\rho^\epsilon u^\epsilon-(\rho u)^\epsilon\right]\cdot \phi u^\epsilon dxdt
\end{equation*}
and
\begin{equation*}
R_2^\epsilon=\int_0^T\int_{\Tbb^d}\diverg\left[(\rho u)^\epsilon\otimes u^\epsilon-(\rho u\otimes u)^\epsilon\right]\cdot\phi u^\epsilon dxdt.
\end{equation*}
The first integral on the left hand side of~\eqref{smoothenergy} equals
\begin{equation}\label{integral1}
\int_0^T\int_{\Tbb^d}\phi\partial_t\rho^\epsilon |u^\epsilon|^2 dxdt + \frac{1}{2}\int_0^T\int_{\Tbb^d}\phi\rho^\epsilon\partial_t |u^\epsilon|^2 dxdt,
\end{equation}
whereas for the second integral in~\eqref{smoothenergy} we use the mollified version of the continuity equation,
\begin{equation*}
\partial_t\rho^\epsilon+\diverg(\rho u)^\epsilon=0,
\end{equation*}
to compute
\begin{equation}\label{integral2}
\begin{aligned}
\int_0^T\int_{\Tbb^d}\diverg((\rho u)^\epsilon\otimes u^\epsilon)\cdot\phi u^\epsilon dxdt&=-\int_0^T\int_{\Tbb^d}\phi ((\rho u)^\epsilon\otimes u^\epsilon): \nabla u^\epsilon dxdt\\
&\hspace{1cm}-\int_0^T\int_{\Tbb^d}u^\epsilon\cdot ((\rho u)^\epsilon\otimes u^\epsilon)\nabla\phi dxdt\\
&=-\frac{1}{2}\int_0^T\int_{\Tbb^d} \phi(\rho u)^\epsilon \cdot\nabla |u^\epsilon|^2 dxdt\\
&\hspace{1cm}-\int_0^T\int_{\Tbb^d}u^\epsilon\cdot ((\rho u)^\epsilon\otimes u^\epsilon)\nabla\phi dxdt\\
&=\frac{1}{2}\int_0^T\int_{\Tbb^d}\phi\diverg (\rho u)^\epsilon |u^\epsilon|^2 dxdt\\
&\hspace{1cm}+\int_0^T\int_{\Tbb^d}\frac{1}{2}\nabla\phi\cdot(\rho u)^\epsilon|u^\epsilon|^2-u^\epsilon\cdot ((\rho u)^\epsilon\otimes u^\epsilon)\nabla\phi dxdt\\
&=-\frac{1}{2}\int_0^T\int_{\Tbb^d}\phi\partial_t\rho^\epsilon |u^\epsilon|^2 dxdt\\
&\hspace{1cm}+\int_0^T\int_{\Tbb^d}\frac{1}{2}\nabla\phi\cdot(\rho u)^\epsilon|u^\epsilon|^2-(u^\epsilon\cdot (\rho u)^\epsilon) u^\epsilon\cdot\nabla\phi dxdt.\\
\end{aligned}
\end{equation}
The third integral in~\eqref{smoothenergy} can be handled using the divergence-free condition on $u$:
\begin{equation}\label{integral3}
\int_0^T\int_{\Tbb^d}\phi u^\epsilon\cdot\nabla p^\epsilon dxdt=-\int_0^T\int_{\Tbb^d}\nabla\phi\cdot u^\epsilon p^\epsilon dxdt.
\end{equation}

Thus, combining~\eqref{integral1},~\eqref{integral2},~\eqref{integral3}, and~\eqref{smoothenergy}, we find
\begin{equation*}
\begin{aligned}
\frac{1}{2}\int_0^T\int_{\Tbb^d}\partial_t\phi\rho^\epsilon|u^\epsilon|^2dxdt+\int_0^T\int_{\Tbb^d}&\nabla\phi\cdot\left[(u^\epsilon\cdot (\rho u)^\epsilon) u^\epsilon-\frac{1}{2}(\rho u)^\epsilon|u^\epsilon|^2+p^\epsilon u^\epsilon\right]dxdt\\
&=-R_1^\epsilon-R_2^\epsilon.
\end{aligned}
\end{equation*}
To prove our claim, it suffices to show $R_1^\epsilon, R_2^\epsilon\to0$ as $\epsilon\to0$.
Indeed, the fact that this is sufficient in order to prove the theorem follows from standard properties of mollifications together with our assumptions~\eqref{besovhypo}.

For $R_1^\epsilon$, we observe that
\begin{equation}\label{pointwisedecomp}
\begin{aligned}
\rho^\epsilon u^\epsilon-(\rho u)^\epsilon=&(\rho^\epsilon-\rho)(u^\epsilon-u)\\
&-\int_{-\epsilon}^\epsilon\int_{\Tbb^d}\eta^\epsilon(\tau,\xi)(\rho(t-\tau,x-\xi)-\rho(t,x))(u(t-\tau,x-\xi)-u(t,x))d\xi d\tau.
\end{aligned}
\end{equation}
The first part of $R_1^\epsilon$ therefore can be estimated by virtue of an integration by parts,~\eqref{besoveps},~\eqref{besovepsgradient}, and our assumptions as
\begin{equation*}
\begin{aligned}
&\left|\int_0^{T}\int_{\Tbb^d}\phi\partial_t[(\rho^\epsilon-\rho)(u^\epsilon-u)]\cdot u^\epsilon dxdt\right|\\
\leq\int_0^{T}\int_{\Tbb^d}&|\partial_t\phi(\rho^\epsilon-\rho)(u^\epsilon-u)\cdot u^\epsilon| dxdt+\int_0^{T}\int_{\Tbb^d}|\phi(\rho^\epsilon-\rho)(u^\epsilon-u)\cdot\partial_t u^\epsilon| dxdt\\
\leq & C\norm{\phi}_{C^1}\epsilon^\beta\epsilon^\alpha\norm{\rho}_{B_q^{\beta,\infty}}\norm{u}^2_{B_p^{\alpha,\infty}}+ C\norm{\phi}_{C^0}\epsilon^\beta\epsilon^\alpha\epsilon^{\alpha-1}\norm{\rho}_{B_q^{\beta,\infty}}\norm{u}^2_{B_p^{\alpha,\infty}}\to0
\end{aligned}
\end{equation*}
as $\epsilon\to0$.

For the second part of $\int\phi R_1^\epsilon dt$ according to~\eqref{pointwisedecomp}, we estimate (using integration by parts, Fubini,~\eqref{besovshift} and~\eqref{besovepsgradient})
\begin{equation*}
\begin{aligned}
&\left|\int_0^{T}\int_{\Tbb^d}\phi\partial_t\int_{-\epsilon}^\epsilon\int_{\Tbb^d}\eta^\epsilon(\tau,\xi)(\rho(t-\tau,x-\xi)-\rho(t,x))(u(t-\tau,x-\xi)-u(t,x))d\xi d\tau\cdot u^\epsilon dxdt\right|\\
&\leq C\norm{\phi}_{C^1}\epsilon^\beta\epsilon^\alpha\norm{\rho}_{B_q^{\beta,\infty}}\norm{u}^2_{B_p^{\alpha,\infty}}+C\norm{\phi}_{C^0}\epsilon^\beta\epsilon^\alpha\epsilon^{\alpha-1}\norm{\rho}_{B_q^{\beta,\infty}}\norm{u}^2_{B_p^{\alpha,\infty}}\to0
\end{aligned}
\end{equation*}
as $\epsilon\to0$.

The estimate for $R_2^\epsilon$ is very similar. We write
\begin{equation*}
\begin{aligned}
(\rho u)^\epsilon\otimes u^\epsilon&-(\rho u\otimes u)^\epsilon=((\rho u)^\epsilon-\rho u)\otimes(u^\epsilon-u)\\
&-\int_{-\epsilon}^\epsilon\int_{\Tbb^d}\eta^\epsilon(\tau,\xi)(\rho u(t-\tau,x-\xi)-\rho u(t,x))\otimes(u(t-\tau,x-\xi)-u(t,x))d\xi d\tau.
\end{aligned}
\end{equation*}
The first part of $R_2^\epsilon$ can be estimated similarly as for $R_1^\epsilon$:
\begin{equation*}
\begin{aligned}
&\left|\int_0^{T}\int_{\Tbb^d}\phi\diverg[((\rho u)^\epsilon-\rho u)\otimes(u^\epsilon-u)]\cdot u^\epsilon dxdt\right|\\
&\leq\norm{\phi}_{C^0}\int_0^{T}\int_{\Tbb^d}|((\rho u)^\epsilon-\rho u)\otimes(u^\epsilon-u):\nabla u^\epsilon|dxdt\\
&\hspace{1cm}+\norm{\phi}_{C^1}\int_0^{T}\int_{\Tbb^d}|(((\rho u)^\epsilon-\rho u)\otimes(u^\epsilon-u)) u^\epsilon|dxdt\\
&\leq C\norm{\phi}_{C^0}\epsilon^\beta\epsilon^\alpha\epsilon^{\alpha-1}\norm{\rho u}_{B_q^{\beta,\infty}}\norm{u}^2_{B_p^{\alpha,\infty}}+ C\norm{\phi}_{C^1}\epsilon^\beta\epsilon^\alpha\norm{\rho}_{B_q^{\beta,\infty}}\norm{u}^2_{B_p^{\alpha,\infty}}\to0.
\end{aligned}
\end{equation*}
Likewise, for the second part of  $R_2^\epsilon$ we get
\begin{equation*}
\begin{aligned}
&\left|\int_0^{T}\int_{\Tbb^d}\diverg\left\{\int_{-\epsilon}^\epsilon\int_{\Tbb^d}\eta^\epsilon(\tau,\xi)(\rho u(t-\tau,x-\xi)-\rho u(t,x))\otimes(u(t-\tau,x-\xi)-u(t,x))d\xi d\tau\right\}\cdot \phi u^\epsilon dxdt\right|\\
&\leq C\norm{\phi}_{C^0}\epsilon^\beta\epsilon^\alpha\epsilon^{\alpha-1}\norm{\rho u}_{B_q^{\beta,\infty}}\norm{u}^2_{B_p^{\alpha,\infty}}+C\norm{\phi}_{C^1}\epsilon^\beta\epsilon^\alpha\norm{\rho}_{B_q^{\beta,\infty}}\norm{u}^2_{B_p^{\alpha,\infty}}\to0,
\end{aligned}
\end{equation*}
which completes the proof.

\end{proof}

Although Theorem~\ref{inhomonsager} implies that the energy $E(t)=\frac{1}{2}\int_{\Tbb^d}\rho|u|^2dx$ is conserved in the sense of distributions, it is still conceivable that it takes different values on a set of times of measure zero. That this can in fact not occur under some further assumptions is the content of our next result:
\begin{corollary}\label{totalenergy}
If, in addition to the assumptions of Theorem~\eqref{inhomonsager},
\begin{equation*}
\rho, u\in L^\infty((0,T)\times\Tbb^d),
\end{equation*}
\begin{equation*}
\sup_{t\in[0,T]}(\norm{\rho}_{B_q^{\beta,\infty}(\Tbb^d)}+\norm{\rho u}_{B_q^{\beta,\infty}(\Tbb^d)})<\infty,
\end{equation*}
and $\beta>0$, then it follows that
\begin{equation}\label{weakcty}
\vr \in C_{{\rm weak}}( [0,T], L^2(\T^d)), \ \vr \vu \in C_{{\rm weak}}( [0,T], L^2 (\T^d, \Rbb^{d})),
\end{equation}
and, setting
\[
E(t) = \int_{ \rho(t, \cdot) > 0} \frac{ |\vr \vu|^2 }{\vr}(t) \ \dx,
\]
we have $E(s)=E(t)$ for all $s,t\in[0,T]$.
\end{corollary}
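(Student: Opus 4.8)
The plan is to turn the distributional balance of Theorem~\ref{inhomonsager} into a genuine pointwise-in-time identity, in three steps: weak continuity of $\rho$ and $\rho u$, an upgrade to \emph{strong} $L^q$-continuity furnished by the Besov bound, and a limit passage in the singular energy density $\abs{\rho u}^2/\rho$.

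\emph{Weak continuity.} Fix $\phi\in C^\infty(\Tbb^d)$ and test the continuity equation: since $\rho u\in L^\infty$, the map $t\mapsto\int_{\Tbb^d}\rho\phi\dx$ has bounded distributional derivative $\int_{\Tbb^d}\rho u\cdot\nabla\phi\dx$, hence is Lipschitz on $[0,T]$. Testing the momentum equation against a smooth field $\Phi$ gives $\frac{d}{dt}\int_{\Tbb^d}\rho u\cdot\Phi\dx=\int_{\Tbb^d}\rho u\otimes u:\nabla\Phi\dx+\int_{\Tbb^d}p\,\diverg\Phi\dx$; the first term is bounded and the second lies in $L^{p^*}_{\loc}\subset L^1_{\loc}(0,T)$, so $t\mapsto\int_{\Tbb^d}\rho u\cdot\Phi\dx$ is (absolutely) continuous on $(0,T)$, and Lipschitz on all of $[0,T]$ when $\Phi$ is solenoidal, the pressure then dropping out. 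As $\sup_t(\norm{\rho(t)}_{L^2}+\norm{\rho u(t)}_{L^2})<\infty$ by the $L^\infty$ hypothesis and smooth fields are dense, a standard $3\varepsilon$-argument promotes these to continuity of $t\mapsto\int_{\Tbb^d}\rho(t)\psi\dx$ and $t\mapsto\int_{\Tbb^d}\rho u(t)\cdot\psi\dx$ for all $\psi\in L^2$, which is \eqref{weakcty}.

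\emph{Strong continuity.} By the very definition~\eqref{besovshift}, the uniform bound $\sup_t\norm{\rho(t)}_{B_q^{\beta,\infty}(\Tbb^d)}<\infty$ with $\beta>0$ forces $\sup_t\norm{\rho(t,\cdot+\xi)-\rho(t,\cdot)}_{L^q}\le C\abs{\xi}^\beta\to0$; by the Fr\'echet--Kolmogorov criterion the family $\{\rho(t)\}_t$ is relatively compact in $L^q(\Tbb^d)$, and likewise $\{\rho u(t)\}_t$. Since the weak limits from the previous step are unique, every sequence $t_n\to t_0$ yields $\rho(t_n)\to\rho(t_0)$ and $\rho u(t_n)\to\rho u(t_0)$ strongly in $L^q$ (the endpoints being included via these uniform bounds); interpolating with the $L^\infty$ bound, the convergence holds in every $L^r$, $r<\infty$.

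\emph{Energy identity.} Testing \eqref{localenergy} against $\theta(t)\cdot1$ with $\theta\in C_c^\infty(0,T)$ makes the flux term vanish on the torus, so $t\mapsto\frac12\int_{\Tbb^d}\rho\abs{u}^2\dx$ is a.e.\ equal to a constant $\frac12\bar E$; as $\rho\abs{u}^2=\abs{\rho u}^2/\rho$ off the vacuum, $E(t)=\bar E$ for a.e.\ $t$. To see $E$ is in fact continuous I would use the structural bound $\abs{\rho u}\le\norm{u}_{L^\infty}\rho$, which holds a.e.\ and is preserved along the a.e.-convergent subsequences of Step~2; it dominates the integrand, $\abs{\rho u}^2/\rho\le\norm{u}_{L^\infty}^2\rho\le\norm{u}_{L^\infty}^2\norm{\rho}_{L^\infty}$. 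For $t_n\to t_0$ the strong convergence gives, along a subsequence, $\rho(t_n)\to\rho(t_0)$ and $\rho u(t_n)\to\rho u(t_0)$ a.e., whence the integrands converge a.e.: trivially on $\{\rho(t_0)>0\}$, and on $\{\rho(t_0)=0\}$ because $0\le\abs{\rho u(t_n)}^2/\rho(t_n)\le\norm{u}_{L^\infty}^2\rho(t_n)\to0$. Dominated convergence on the finite-measure torus yields $E(t_n)\to E(t_0)$, and the usual subsequence argument removes the subsequence. Thus $E\in C([0,T])$, and being a.e.\ equal to $\bar E$ it equals $\bar E$ everywhere, which is the assertion.

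\emph{Main obstacle.} The decisive --- and only genuinely delicate --- step is the last limit passage in the \emph{singular, nonlinear} quantity $\abs{\rho u}^2/\rho$ across a vacuum set that may depend on $t$. Weak continuity is useless here: by convexity of the perspective function $(\rho,m)\mapsto\abs{m}^2/\rho$ it gives only lower semicontinuity, hence $E(t)\le\bar E$, and cannot by itself exclude a downward jump at an exceptional time. What saves the argument is the combination, available precisely because $\beta>0$ and $\rho,u\in L^\infty$, of the strong $L^q$-compactness coming from the Besov bound with the pointwise bound $\abs{\rho u}\le\norm{u}_{L^\infty}\rho$, which together tame the vacuum and force $E$ to be genuinely continuous.
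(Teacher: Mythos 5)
Your proposal is correct and follows essentially the same route as the paper's proof: weak continuity deduced from the equations, strong $L^1$-continuity of $t\mapsto(\rho,\rho u)(t)$ obtained from the uniform Besov bound via Fr\'echet--Kolmogorov compactness together with uniqueness of the weak limits, and then continuity of the singular energy density across the vacuum using the bound $|\rho u|^2/\rho\le\|u\|_{L^\infty}^2\,\rho$. The only differences are cosmetic: where the paper splits $\Tbb^d$ into $\{\rho(t)\ge\epsilon\}$ and its complement and invokes Egorov's and Vitali's theorems, you use direct pointwise domination and dominated convergence, and you make explicit the concluding step (distributional conservation makes $E$ constant a.e., and continuity upgrades this to all $t\in[0,T]$) which the paper leaves implicit.
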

\begin{proof}
The weak continuity~\eqref{weakcty} follows in a standard way from the equations, cf. e.g.\ Appendix A in~\cite{euler2}. Moreover, by the Fr\'echet-Kolmogorov Theorem, $B_q^{\beta,\infty}(\Tbb^d)$ embeds compactly into $L^1(\Tbb^d)$. Therefore, if $t\in[0,T]$ and $t_n\to t$, then $\rho(t_n)\to\rho(t)$ and $\rho u(t_n)\to\rho u(t)$ strongly in $L^1$.

 Our aim is to prove strong continuity in $L^1$ of the energy density, i.e.\ as $n\to\infty$
\begin{equation}\label{strongL1}
1_{\rho(t_n) > 0} \frac{|\rho u|^2}{\rho}(t_n)\to 1_{\rho(t) > 0} \frac{|\rho u|^2}{\rho}(t) \quad in \  L^1(\T^d),
\end{equation}
Consider two sets: $F_\varepsilon=\{x\in \T^d: \varrho(t)\ge \varepsilon\}$ and $\T^d\setminus F_\ep$.

Step 1. We begin with  the set $F_\ep$. From the strong convergence in $L^1$ we conclude that, up to a subsequence (not relabeled), $(\rho(t_n))_{n\in\Nbb}$ and $(\rho u(t_n))_{n\in\Nbb}$ converge a.e.\ in $F_\ep$. By Egorov's theorem, for every $\delta>0$ there exists a set $E_\delta$  such that $|\T^d\setminus E_\delta|<\delta$, where the sequence $(\rho(t_n))_{n\in\Nbb}$ converges uniformly. This allows to conclude that on the set $E_\delta\cap F_\ep$ for sufficiently large $n$ also
$\rho(t_n)\ge \frac{\ep}{2}$. Since the function $(\rho,u)\mapsto \frac{|\rho u|^2}{\rho}$ is well defined and also continuous on $[\frac{\ep}{2},\infty)\times \T^d$, we deduce (as $\delta>0$ was arbitrary)
\begin{equation*}
\frac{|\rho u|^2}{\rho}(t_n)\to \frac{|\rho u|^2}{\rho}(t) \quad a.e. \ in\  F_\ep
\end{equation*}
as $n\to\infty$. Since $\frac{|\rho u|^2}{\rho}$ is uniformly bounded in $L^\infty((0,T)\times \T^d)$, the sequence $\left(\frac{|\rho u|^2}{\rho}(t_n)\right)_{n\in\Nbb}$ is equiintegrable in $L^1(F_\ep)$ and by Vitali's theorem we conclude strong convergence in $L^1(F_\ep)$.

Step 2. On the set $\T^d\setminus F_\ep$ observe that since $u$ is in $L^\infty((0,T)\times \T^d)$ and in the same way as in the previous step we conclude that $\rho(t_n)$ is sufficiently small for $n$ large enough, then $\varrho(t_n)|u|^2(t_n)$ converges to zero as $\ep\to0$. Thus \eqref{strongL1} holds.
\end{proof}

We end our discussion of the inhomogeneous incompressible Euler equations by recording a special case of Theorem~\ref{inhomonsager} which states that, if the velocity is sufficiently regular, we can ensure energy conservation even when the density has jump discontinuities. More precisely, we have:
\begin{corollary}\label{BV}
Let $\rho\in (BV\cap L^\infty)((0,T)\times\Tbb^d)$ and $u\in (B_3^{\alpha,\infty}\cap L^\infty)((0,T)\times\Tbb^d)$ be a solution of~\eqref{inhom}, where $\alpha>\frac{1}{3}$. Then the energy is conserved.
\begin{proof}
Combine Proposition~\ref{BVembedding}, Theorem~\ref{inhomonsager} and Remark~\ref{inhomremark}(2).
\end{proof}
\end{corollary}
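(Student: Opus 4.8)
The plan is to read off this corollary from Theorem~\ref{inhomonsager}, the only work being to choose the Besov exponents correctly and to convert the $BV$ hypothesis on $\rho$ into a Besov hypothesis via Proposition~\ref{BVembedding}. Since $u$ is assumed to lie in $B_3^{\alpha,\infty}$, the natural choice is $p=3$; the first two relations in~\eqref{exponenthypo} then force $q=3$ and $p^*=3/2$. Applying Proposition~\ref{BVembedding} with exponent $3$ to $\rho\in(BV\cap L^\infty)((0,T)\times\Tbb^d)$ yields $\rho\in B_3^{1/3,\infty}$, so we may take $\beta=1/3$. The decisive point is the third relation in~\eqref{exponenthypo}: since $\alpha>1/3$ by assumption and $\beta=1/3$, we get $2\alpha+\beta>2\cdot\frac13+\frac13=1$ strictly. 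Thus the borderline exponent $\beta=1/p$ delivered by the $BV$ embedding is exactly enough, precisely because $\alpha$ sits strictly above the Onsager threshold $1/3$.

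It remains to supply the regularity of the product $\rho u$, which I would obtain through Remark~\ref{inhomremark}(2) rather than assuming it separately. With $p=q=3$ and $\alpha\geq\beta=1/3$ the embedding $B_3^{\alpha,\infty}\subset B_3^{1/3,\infty}$ gives $u\in(B_3^{1/3,\infty}\cap L^\infty)$; since $\rho$ belongs to the same space and $(B_3^{1/3,\infty}\cap L^\infty)$ is an algebra, the product $\rho u$ automatically lies in $B_3^{1/3,\infty}=B_q^{\beta,\infty}$. All Besov hypotheses~\eqref{besovhypo} of Theorem~\ref{inhomonsager} are therefore satisfied with $(p,q,\alpha,\beta)=(3,3,\alpha,\tfrac13)$, and the local energy balance~\eqref{localenergy}---hence conservation of energy---follows. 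If one does not wish to assume any integrability of the pressure, one instead invokes Remark~\ref{inhomremark}(3) to obtain the total energy balance $\frac{d}{dt}E(t)=0$ directly.

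I do not expect a genuine analytic obstacle here, as all the commutator estimates are already carried out in the proof of Theorem~\ref{inhomonsager}; the argument is essentially bookkeeping. The one point requiring care is that the endpoint integrability $\beta=1/3$ produced by the $BV$ embedding combines with the strict hypothesis $\alpha>1/3$ to yield the \emph{strict} inequality $2\alpha+\beta>1$, and that the algebra property of $B_3^{1/3,\infty}\cap L^\infty$ is what frees us from imposing regularity on $\rho u$ by hand.
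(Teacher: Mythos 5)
Your proposal is correct and is exactly the paper's argument: the paper's one-line proof (combine Proposition~\ref{BVembedding}, Theorem~\ref{inhomonsager}, and Remark~\ref{inhomremark}(2)) unpacks precisely into your choices $(p,q,\alpha,\beta)=(3,3,\alpha,\tfrac13)$, the strict inequality $2\alpha+\beta>1$ from $\alpha>\tfrac13$, and the algebra property supplying the regularity of $\rho u$. Your closing remark on handling the pressure via Remark~\ref{inhomremark}(3) matches the paper's intent as well, so there is nothing to add.
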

In fact we can go to the extreme and assume only $\rho\in L^1$ and $u$ to be H\"older continuous with exponent greater than $1/2$, then still we have energy conservation.

\section{Energy Conservation for the Compressible Isentropic Euler Equations}

We consider now the isentropic Euler equations,
\begin{equation}\label{compressible}
\begin{aligned}
\partial_t(\rho u)+\diverg(\rho u\otimes u)+\nabla p(\rho)&=0,\\
\partial_t\rho+\diverg(\rho u)&=0.
\end{aligned}
\end{equation}
In contrast to the inhomogeneous incompressible system~\eqref{inhom}, the pressure $p=p(\rho)$ is no longer a Lagrange multiplier, but a constitutively given function of the density. We will make use of the so-called pressure potential defined by
\begin{equation*}
P(\rho)=\rho\int_1^\rho\frac{p(r)}{r^2}dr.
\end{equation*}
It turns out that we can prove a theorem on the compressible system that is similar to Theorem~\ref{inhomonsager}:
\begin{theorem}\label{compressibleonsager}
Let $\rho$, $u$ be a solution of~\eqref{compressible} in the sense of distributions. Assume 
\begin{equation*}
u\in B_3^{\alpha,\infty}((0,T)\times\Tbb^d),\hspace{0.3cm}\rho, \rho u\in B_3^{\beta,\infty}((0,T)\times\Tbb^d),\hspace{0.3cm}
0 \leq \underline{\rho} \leq \rho \leq \overline{\rho} \ \mbox{a.a. in} (0,T)\times\Tbb^d,
\end{equation*}
for some constants $\underline{\vr}$, $\overline{\vr}$, and
$0\leq\alpha,\beta\leq1$ such that
\begin{equation}\label{alphabeta}
\beta > \max \left\{ 1 - 2 \alpha; \frac{1 - \alpha}{2} \right\}.
\end{equation}
Assume further that $p \in C^2[\underline{\vr}, \overline{\vr}]$, and, in addition
\begin{equation}\label{pressure}
p'(0) = 0 \ \mbox{as soon as}\ \underline{\vr} = 0.
\end{equation}
Then the energy is locally conserved, i.e.
\begin{equation*}
\partial_t\left(\frac{1}{2}\rho|u|^2+P(\rho)\right)+\diverg\left[\left(\frac{1}{2}\rho|u|^2+p(\rho)+P(\rho)\right)u\right]=0
\end{equation*}
in the sense of distributions on $(0,T)\times\Tbb^d$.
\end{theorem}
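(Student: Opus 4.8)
The plan is to mimic the proof of Theorem~\ref{inhomonsager}: mollify the momentum equation in space and time, test against $\phi u^\epsilon$ with $\phi\in C_c^\infty((0,T)\times\Tbb^d)$, and send $\epsilon\to0$. The two nonlinear terms $\partial_t(\rho u)$ and $\diverg(\rho u\otimes u)$ are handled verbatim as in Theorem~\ref{inhomonsager} with $p=q=3$: introducing the commutators $R_1^\epsilon,R_2^\epsilon$ and inserting the mollified continuity equation exactly as in~\eqref{integral1}--\eqref{integral2}, the kinetic part converges to $-\tfrac12\int\partial_t\phi\,\rho|u|^2-\tfrac12\int\nabla\phi\cdot\rho|u|^2u$, while $R_1^\epsilon,R_2^\epsilon\to0$ whenever the worst power $\epsilon^{2\alpha+\beta-1}$ vanishes, i.e. $\beta>1-2\alpha$. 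The genuinely new ingredient is the pressure term, because now $\diverg u\neq0$ and the simple integration by parts~\eqref{integral3} is no longer available.

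For the pressure I would first integrate by parts,
\begin{equation*}
\int_0^T\!\!\int_{\Tbb^d}\phi\,u^\epsilon\cdot\nabla p(\rho)^\epsilon\, dx\, dt=-\int_0^T\!\!\int_{\Tbb^d}\nabla\phi\cdot u^\epsilon\,p(\rho)^\epsilon\, dx\, dt-\int_0^T\!\!\int_{\Tbb^d}\phi\,p(\rho)^\epsilon\,\diverg u^\epsilon\, dx\, dt.
\end{equation*}
The first term converges to $-\int\nabla\phi\cdot u\,p(\rho)$ by strong convergence of $u^\epsilon$ in $L^3$ and $p(\rho)^\epsilon$ in $L^{3/2}$. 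The second is dangerous, since $\|\diverg u^\epsilon\|_{L^3}\lesssim\epsilon^{\alpha-1}$ by~\eqref{besovepsgradient}. To tame it I would derive a renormalized continuity equation. Writing the commutator $\Ecal^\epsilon:=(\rho u)^\epsilon-\rho^\epsilon u^\epsilon$, the mollified continuity equation becomes
\begin{equation*}
\partial_t\rho^\epsilon+u^\epsilon\cdot\nabla\rho^\epsilon+\rho^\epsilon\,\diverg u^\epsilon=-\diverg\Ecal^\epsilon;
\end{equation*}
multiplying by $P'(\rho^\epsilon)$ and using the algebraic identity $\rho P'(\rho)-P(\rho)=p(\rho)$ yields
\begin{equation*}
\partial_t P(\rho^\epsilon)+\diverg\!\big(P(\rho^\epsilon)u^\epsilon\big)+p(\rho^\epsilon)\,\diverg u^\epsilon=-P'(\rho^\epsilon)\,\diverg\Ecal^\epsilon.
\end{equation*}
Testing this against $\phi$ expresses $\int\phi\,p(\rho^\epsilon)\diverg u^\epsilon$ through the convergent quantities $\int\partial_t\phi\,P(\rho^\epsilon)$ and $\int\nabla\phi\cdot P(\rho^\epsilon)u^\epsilon$ plus the single error $\int\phi\,P'(\rho^\epsilon)\diverg\Ecal^\epsilon$.

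It then remains to dispose of two error terms, and this is exactly where~\eqref{alphabeta} enters. First, replacing $p(\rho)^\epsilon$ by $p(\rho^\epsilon)$ costs $\int\phi\,[p(\rho)^\epsilon-p(\rho^\epsilon)]\diverg u^\epsilon$; a second-order Taylor expansion (the first-order term cancels because the mollifier has unit mass) together with $p\in C^2$ gives $\|p(\rho)^\epsilon-p(\rho^\epsilon)\|_{L^{3/2}}\lesssim\epsilon^{2\beta}$, so this contribution is $\lesssim\epsilon^{2\beta}\cdot\epsilon^{\alpha-1}=\epsilon^{\alpha+2\beta-1}$. Second, integrating by parts in the commutator term, its leading part $\int\phi\,P''(\rho^\epsilon)\nabla\rho^\epsilon\cdot\Ecal^\epsilon$ is controlled using $\|\nabla\rho^\epsilon\|_{L^3}\lesssim\epsilon^{\beta-1}$ and the standard bound $\|\Ecal^\epsilon\|_{L^{3/2}}\lesssim\epsilon^{\alpha+\beta}$ (from the decomposition~\eqref{pointwisedecomp}, exactly as for $R_1^\epsilon$), giving again $\lesssim\epsilon^{\alpha+2\beta-1}$. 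Both vanish precisely when $\alpha+2\beta>1$, i.e. $\beta>\tfrac{1-\alpha}{2}$; combined with the kinetic requirement $\beta>1-2\alpha$ this is exactly~\eqref{alphabeta}. Assembling the pieces, the pressure term converges to $-\int\partial_t\phi\,P(\rho)-\int\nabla\phi\cdot(p(\rho)+P(\rho))u$, which together with the kinetic limit is the weak form of the asserted balance.

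Finally, all constants above depend on $\|P'\|_\infty,\|P''\|_\infty,\|p''\|_\infty$ over $[\underline{\rho},\overline{\rho}]$; these are finite because $p\in C^2[\underline{\rho},\overline{\rho}]$, because a probability kernel preserves the bounds $\underline{\rho}\le\rho^\epsilon\le\overline{\rho}$, and because the hypothesis~\eqref{pressure} guarantees that $P''(\rho)=p'(\rho)/\rho$ remains bounded as $\rho\to0$, so that $P\in C^2$ up to the vacuum. I expect the main obstacle to be the pressure commutator: setting up the renormalized continuity identity at the mollified level and extracting the sharp rate $\epsilon^{\alpha+2\beta-1}$ from the two error terms. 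The kinetic estimates, by contrast, are a direct transcription of the proof of Theorem~\ref{inhomonsager}.
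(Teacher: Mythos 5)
Your proposal is correct and follows essentially the same route as the paper's proof: the same kinetic commutators $R_1^\epsilon,R_2^\epsilon$, the same renormalized mollified continuity equation for $P(\rho^\epsilon)$ (your divergence-form identity with error $-P'(\rho^\epsilon)\diverg\Ecal^\epsilon$ is exactly the paper's $S^\epsilon$-identity), and the same two error estimates — the Taylor-expansion bound $\epsilon^{2\beta}\epsilon^{\alpha-1}$ for swapping $p(\rho)^\epsilon$ with $p(\rho^\epsilon)$, and the bound $\epsilon^{\alpha+2\beta-1}$ for the $P''(\rho^\epsilon)\nabla\rho^\epsilon$ term — yielding precisely the condition $\beta>\max\{1-2\alpha,\tfrac{1-\alpha}{2}\}$. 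The only difference is cosmetic: you integrate by parts before exchanging $p(\rho)^\epsilon$ for $p(\rho^\epsilon)$, whereas the paper introduces the commutator $R_3^\epsilon$ at the gradient level and then integrates by parts, which amounts to the same estimates.
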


\begin{remark}
\begin{enumerate}
\item For the isentropic pressure law $p(\rho)=\kappa \rho^\gamma$, $\gamma>1$, our $C^2$ assumption on $p$ is satisfied if either we exclude vacuum (i.e.\ we assume $\underline{\rho} > 0$) or we choose $\gamma\geq 2$.
\item The conservation of total energy follows, under appropriate additional assumptions, similarly to Corollary~\ref{totalenergy}.
\item The conclusion of Corollary~\ref{BV} remains true. This can be interpreted roughly as follows: Energy dissipating shocks can not form exclusively in the density, but only in the density and the velocity simultaneously.
\item Shocks also provide examples that show that our assumptions are sharp: A shock solution dissipates energy, but $\rho$ and $u$ are in $BV\cap L^\infty$, which embeds (see Proposition~\eqref{BVembedding}) into $B_3^{1/3,\infty}$. Hence such a solution satisfies~\eqref{alphabeta} with equality but fails to satisfy the conclusion. Besides, there are also (non-physical) $BV$ weak solutions that produce energy.
\item It is easy to check that under the hypotheses on the pressure $p$ stated above, we have $P \in C^2[\underline{\rho}, \overline{\rho}]$.
\item The hypothesis on temporal regularity can be relaxed provided
$\underline{\rho} > 0$, meaning there is no vacuum. Indeed, in this case $\frac{(\rho u)^\epsilon}{\rho^\epsilon}$ can be used as a test function in the momentum equation, cf.~\cite{TrSh2016}. 
\end{enumerate}
\end{remark}

We state another result on the compressible system, where we do not need to require Besov regularity in time:

\begin{theorem}\label{compressibleonsagerBV}
Assume that the pressure $p$ satisfies
\begin{equation} \label{pres2}
p \in C^2(0, \infty) \cap C[0, \infty), \ p(0) = 0.
\end{equation}
Let $\rho \in L^\infty((0,T) \times \T^d)$, $u \in L^\infty((0,T) \times \T^d)$ be a solution of~\eqref{compressible} in the sense of distributions. In addition, assume that
\[
u(t) \in BV \cap C(\T^d),\ \rho(t) \in BV \cap C(\T^d) \ \mbox{for a.a.}\ t \in [0,T]
\]
and
\begin{equation} \label{E1-thm}
u, \ \rho \in L^\infty(0,T; C(\T^d)), \ \nabla u, \ \nabla \rho \in L^\infty_{{\rm weak}-(*)} (0,T; \mathcal{M}(\T^d)).
\end{equation}
Then the energy is locally conserved, i.e.
\begin{equation*}
\partial_t\left(\frac{1}{2}\rho|u|^2+P(\rho)\right)+\diverg\left[\left(\frac{1}{2}\rho|u|^2+p(\rho)+P(\rho)\right)u\right]=0
\end{equation*}
in the sense of distributions on $(0,T)\times\Tbb^d$.
\end{theorem}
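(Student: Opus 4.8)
The plan is to reproduce the commutator argument behind Theorem~\ref{compressibleonsager}, but to mollify \emph{only in space}, extracting the missing temporal regularity directly from the equations. Writing $m=\rho u$ and letting $\eta^\epsilon$ be a purely spatial mollifier on $\T^d$, the mollified continuity and momentum equations
\[
\partial_t\rho^\epsilon+\diverg m^\epsilon=0,\qquad \partial_t m^\epsilon+\diverg(\rho u\otimes u)^\epsilon+\nabla p(\rho)^\epsilon=0
\]
hold pointwise, and their right-hand sides are bounded spatial divergences of $L^\infty$ fields. Hence $\rho^\epsilon$ and $m^\epsilon$ are Lipschitz in $t$ for each fixed $\epsilon$, so classical calculus in time is available; this is the precise mechanism by which the spatial regularity~\eqref{E1-thm} furnishes usable time regularity, and it is why no assumption in time is needed. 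The decisive structural point is that, keeping $\rho,m$ (not $u$) as primary unknowns, every time derivative in the energy computation is $\partial_t\rho^\epsilon$ or $\partial_t m^\epsilon$, which the equations turn into spatial expressions; the uncontrolled quantity $\partial_t u^\epsilon$ is never produced.

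I would then test the mollified momentum equation against $\phi\,\tilde u^\epsilon$, where $\phi\in C_c^\infty((0,T)\times\T^d)$ and $\tilde u^\epsilon=m^\epsilon/\rho^\epsilon$ (set to $0$ on $\{\rho^\epsilon=0\}$; note $|\tilde u^\epsilon|\le\|u\|_{L^\infty}$ since $|m^\epsilon|\le\|u\|_{L^\infty}\rho^\epsilon$). On $\{\rho^\epsilon>0\}$ the identity
\[
\partial_t m^\epsilon\cdot\tilde u^\epsilon=\partial_t\Bigl(\tfrac{|m^\epsilon|^2}{2\rho^\epsilon}\Bigr)-\tfrac{|m^\epsilon|^2}{2(\rho^\epsilon)^2}\partial_t\rho^\epsilon,
\]
combined with the mollified continuity equation, produces the kinetic density $\tfrac12\rho^\epsilon|\tilde u^\epsilon|^2$ and, after integrating by parts in space, the advective flux $\tfrac12\rho^\epsilon|\tilde u^\epsilon|^2\tilde u^\epsilon$, up to commutator remainders analogous to the $R_1^\epsilon,R_2^\epsilon$ of the proof of Theorem~\ref{inhomonsager}. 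The pressure term $\phi\,\tilde u^\epsilon\cdot\nabla p(\rho)^\epsilon$ is recast, via the pressure-potential identity $\rho P'(\rho)-P(\rho)=p(\rho)$ satisfied by $P(\rho)=\rho\int_1^\rho p(r)r^{-2}\,dr$, as the potential contribution to $\partial_t P+\diverg[(p+P)\tilde u^\epsilon]$, modulo a further nonlinear commutator; here $p\in C^2$ (so $P$ inherits the regularity of $p$) is exactly what licenses the Taylor expansion of these commutators.

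It remains to show all commutator remainders vanish as $\epsilon\to0$, and this is where~\eqref{E1-thm} enters essentially. By Proposition~\ref{BVembedding}, $\rho,u\in BV\cap L^\infty$ in space embed into $B_3^{1/3,\infty}$, so the naive scaling of each remainder is $\epsilon^{2\cdot\frac13+\frac13-1}=\epsilon^0$, i.e.\ exactly critical --- precisely the borderline case in which shock solutions show the pure Besov criterion of Theorem~\ref{compressibleonsager} to be sharp. The key gain comes from the extra hypothesis that $\rho(t),u(t)$ are \emph{continuous}: for $w\in BV\cap C(\T^d)$ with spatial modulus of continuity $\omega_w$, interpolation gives
\[
\|w(\cdot+\xi)-w\|_{L^3}\le \|w(\cdot+\xi)-w\|_{L^\infty}^{2/3}\,\|w(\cdot+\xi)-w\|_{L^1}^{1/3}\le \omega_w(\xi)^{2/3}\bigl(|\xi|\,TV(w)\bigr)^{1/3},
\]
so each spatial difference carries an extra factor $\omega(\epsilon)^{2/3}\to0$ beyond the bare $B_3^{1/3,\infty}$ rate, and the mollified gradient improves to $\|\nabla u^\epsilon\|_{L^3}\le C\epsilon^{-2/3}\omega_u(\epsilon)^{2/3}TV(u)^{1/3}$. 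Tracking these factors through each bilinear commutator (two differences times one mollified gradient) turns the critical $\epsilon^0$ into $\omega(\epsilon)^2\to0$; the uniform-in-$t$ bounds of~\eqref{E1-thm} let me integrate this estimate in time.

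The main obstacle is exactly this criticality, together with the vacuum set: the remainders close only because continuity upgrades the borderline scaling, and one must still verify the whole argument survives where $\rho=0$. There $\rho^\epsilon$ may vanish and $\tilde u^\epsilon$ is merely bounded, so I would control the kinetic density by $\tfrac12\rho^\epsilon|\tilde u^\epsilon|^2\le\tfrac12\|u\|_{L^\infty}^2\rho^\epsilon$ and the pressure by $p(\rho^\epsilon)\to p(0)=0$, using the continuity of $\rho$ (so that $\rho$ cannot jump into vacuum) to pass to the limit in both the energy and the flux; this is where~\eqref{pres2} with $p(0)=0$ is needed. Strong $L^1$-convergence of the spatial mollifications, argued as in Corollary~\ref{totalenergy}, then delivers the stated local balance.
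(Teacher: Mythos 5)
Your route is genuinely different from the paper's, so let me first say what the paper does: it never tests with $m^\epsilon/\rho^\epsilon$. It keeps $u$ itself as the test field, but regularized twice --- in space by $\eta^\epsilon$ and in time by the one-sided kernel $\chi_h = h^{-1}1_{[-h,0]}$, whose key property is that $\partial_t v^h$ is the difference quotient $(v(t+h)-v(t))/h$. The energy balance is derived for $u^{\epsilon,h}$, and the error terms are passed to the limit \emph{first} in $\epsilon$ (using that mollifications of $BV\cap C(\T^d)$ fields converge uniformly while their gradients converge weak-* as measures, which is exactly what~\eqref{E1-thm} is for) and \emph{then} in $h$, with no quantitative rates at all. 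Your plan --- spatial mollification only, testing with $\tilde u^\epsilon = m^\epsilon/\rho^\epsilon$, and upgrading the critical $B_3^{1/3,\infty}$ scaling by the modulus of continuity --- is instead the Leslie--Shvydkoy strategy of~\cite{TrSh2016}, which the paper explicitly flags, in item (6) of the Remark following Theorem~\ref{compressibleonsager}, as available only when $\underline{\rho}>0$, i.e.\ in the absence of vacuum. Theorem~\ref{compressibleonsagerBV} is stated precisely so as to cover vacuum (that is the point of hypothesis~\eqref{pres2}, $p\in C^2(0,\infty)\cap C[0,\infty)$ with $p(0)=0$), and that is where your proof breaks.

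The concrete gap: your quantitative commutator estimates (two shifted differences times one mollified gradient, each difference gaining a factor $\omega(\epsilon)^{2/3}$) are valid for mollifications of the $BV\cap C$ fields $\rho$, $u$, $m$ --- but the remainders your scheme actually generates involve the \emph{quotient}, namely $\nabla\tilde u^\epsilon$ and differences such as $(\rho u\otimes u)^\epsilon - m^\epsilon\otimes\tilde u^\epsilon$. Since $\nabla\tilde u^\epsilon = \nabla m^\epsilon/\rho^\epsilon - m^\epsilon\nabla\rho^\epsilon/(\rho^\epsilon)^2$, these carry inverse powers of $\rho^\epsilon$ and admit no $\epsilon$-rate near the vacuum boundary; in~\cite{TrSh2016} every such estimate indeed carries a factor of $1/\underline{\rho}$. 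Your vacuum patches (the pointwise bounds $\frac12\rho^\epsilon|\tilde u^\epsilon|^2\le\frac12\|u\|_{L^\infty}^2\rho^\epsilon$ and $p(\rho^\epsilon)\to p(0)=0$) control the energy density and flux \emph{on} the vacuum set, but say nothing about these commutator remainders, which are not localized away from vacuum. Three further unaddressed points: (i) continuity of $\rho$ does not prevent vacuum --- $\rho$ can vanish on large open sets, where for small $\epsilon$ one has $\tilde u^\epsilon\equiv 0$ while $u$ need not vanish, so $\tilde u^\epsilon\not\to u$ there; (ii) the renormalization producing $P$ requires the chain rule with $\diverg\tilde u^\epsilon$, again uncontrolled near vacuum; (iii) your Taylor expansion for the pressure commutator needs $p\in C^2$ uniformly on the range of $\rho$, which~\eqref{pres2} does not provide when $0$ lies in that range --- the paper's proof needs the decomposition $p = p_\delta + (p-p_\delta)$ with $p_\delta\in C^2[0,\infty)$, $p_\delta(0)=p_\delta'(0)=0$, $|p-p_\delta|<\delta$ precisely to get around this, and your proposal has no analogue of it.
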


\begin{remark}
Following the proof of Corollary~\ref{totalenergy}, we observe that in the situation of Theorem~\ref{compressibleonsagerBV} the conservation of total energy holds for every time without any further assumptions.
\end{remark}

\subsection{Proof of Theorem~\ref{compressibleonsager}}
Just as in the previous section, we mollify the momentum equation, multiply by $\phi u^\epsilon$ for a test function $\phi$, and integrate in time and space to obtain~\eqref{mollifiedequation}. We rewrite this again using commutators to obtain
\begin{equation}\label{smoothenergy2}
\begin{aligned}
\int_0^T\int_{\Tbb^d}\partial_t(\rho^\epsilon u^\epsilon)\cdot \phi u^\epsilon& dxdt+\int_0^T\int_{\Tbb^d}\diverg((\rho u)^\epsilon\otimes u^\epsilon)\cdot \phi u^\epsilon dxdt\\
+&\int_0^T\int_{\Tbb^d}\phi u^\epsilon\cdot\nabla p(\rho^\epsilon) dxdt=R_1^\epsilon+R_2^\epsilon+R_3^\epsilon,
\end{aligned}
\end{equation}
the only difference to~\eqref{smoothenergy} being the pressure term, for which we introduce the commutator
\begin{equation*}
R_3^\epsilon=\int_0^T\int_{\Tbb^d}\nabla(p(\rho^\epsilon)-p(\rho)^\epsilon)\cdot\phi u^\epsilon dxdt,
\end{equation*}
while $R_1^\epsilon$ and $R_2^\epsilon$ are defined as in the last section.

For the first and second integral in~\eqref{smoothenergy2} we find, as before, the relations~\eqref{integral1} and~\eqref{integral2}, because the corresponding computations did not require $u^\epsilon$ to be divergence-free. It is only the third integral which requires additional attention. For this we first need to observe that, due to the chain rule and the mollified mass equation,
\begin{equation*}
\partial_tP(\rho^\epsilon)+\nabla P(\rho^\epsilon)\cdot u^\epsilon+P'(\rho^\epsilon)\rho^\epsilon\diverg u^\epsilon=S^\epsilon,
\end{equation*}
where we introduced the commutator
\begin{equation*}
S^\epsilon=P'(\rho^\epsilon)\diverg(\rho^\epsilon u^\epsilon-(\rho u)^\epsilon).
\end{equation*}
Observe also that, by definition of $P$,
\begin{equation*}
\rho^\epsilon P'(\rho^\epsilon)=P(\rho^\epsilon)+p(\rho^\epsilon).
\end{equation*}
After these preparations, we can compute the third integral in~\eqref{smoothenergy2} as
\begin{equation*}
\begin{aligned}
\int_0^T&\int_{\Tbb^d}\phi u^\epsilon\cdot\nabla p(\rho^\epsilon)dxdt\\
&=-\int_0^T\int_{\Tbb^d}\nabla\phi\cdot u^\epsilon p(\rho^\epsilon)dxdt-\int_0^T\int_{\Tbb^d}\phi p(\rho^\epsilon)\diverg u^\epsilon dxdt\\
&=-\int_0^T\int_{\Tbb^d}\nabla\phi\cdot u^\epsilon p(\rho^\epsilon)dxdt-\int_0^T\int_{\Tbb^d}\phi [\rho^\epsilon P'(\rho^\epsilon)-P(\rho^\epsilon)]\diverg u^\epsilon dxdt\\
&=-\int_0^T\int_{\Tbb^d}\nabla\phi\cdot u^\epsilon p(\rho^\epsilon)dxdt+\int_0^T\int_{\Tbb^d}\phi [\partial_t P(\rho^\epsilon)+\nabla P(\rho^\epsilon)\cdot u^\epsilon+P(\rho^\epsilon) \diverg u^\epsilon] dxdt\\
&\hspace{5cm}-\int_0^T\int_{\Tbb^d}\phi S^\epsilon dxdt\\
&=-\int_0^T\int_{\Tbb^d}\nabla\phi\cdot u^\epsilon p(\rho^\epsilon)dxdt-\int_0^T\int_{\Tbb^d}\partial_t\phi P(\rho^\epsilon)+\nabla\phi\cdot P(\rho^\epsilon) u^\epsilon dxdt\\
&\hspace{5cm}-\int_0^T\int_{\Tbb^d}\phi S^\epsilon dxdt\\
\end{aligned}
\end{equation*}
Putting everything together, we see that the theorem is proved once we have shown that $R_3^\epsilon$ and $\int\int \phi S^\epsilon dxdt$ tend to zero as $\epsilon\to0$. Indeed, the assumptions of Theorem~\ref{compressibleonsager} are stronger than those of Theorem~\ref{inhomonsager}, so that our previous estimates for $R_1^\epsilon$ and $R_2^\epsilon$ hold a fortiori.

Consider first $R_3^\epsilon$. Let us observe that if $p \in C^{2}([a,b])$ then
$$|p(s)-p(s_0)-p'(s_0)(s-s_0)|\le C (s-s_0)^2$$ for every $s,s_0\in [a,b]$. Note that the constant $C$ can be chosen independently of $s,s_0$.
Therefore
$$|p(\rho^\epsilon(t,x))-p(\rho(t,x))-p'(\rho(t,x))(\rho^\epsilon(t,x)-\rho(t,x))|\le C (\rho(t,x)-\rho^\epsilon (t,x))^2$$
and similarly

$$|p(\rho(t,y))-p(\rho(t,x))-p'(\rho(t,x))(\rho(t,y)-\rho(t,x))|\le C (\rho(t,x)-\rho(t,y))^2.$$

Applying convolution w.r.t.\ $y$ to the last inequality we get, after invoking Jensen's inequality:
$$|p(\rho)^\epsilon(t,x)-p(\rho(t,x))-p'(\rho(t,x))(\rho^\epsilon(t,x)-\rho(t,x))|\le C (\rho(t,x)-\rho(t,\cdot))^2 *_y \eta^\epsilon.$$

Therefore
$$|p(\rho^\epsilon(t,x))-p(\rho)^\epsilon(t,x)|\le C ((\rho(t,x)-\rho^\epsilon (t,x))^2 + (\rho(t,x)-\rho(t,\cdot))^2 *_y \eta^\epsilon.$$

We can thus estimate
\begin{equation*}
\begin{aligned}
|R_3^\epsilon|&=\left|\int_0^T\int_{\Tbb^d}\nabla(p(\rho^\epsilon)-p(\rho)^\epsilon)\cdot\phi u^\epsilon dxdt\right| \\
\leq& \int_0^T\int_{\Tbb^d}|\phi(p(\rho^\epsilon)-p(\rho)^\epsilon)\diverg u^\epsilon| dxdt+ \int_0^T\int_{\Tbb^d}|(p(\rho^\epsilon)-p(\rho)^\epsilon) u^\epsilon\cdot\nabla\phi| dxdt\\
&\leq C\norm{\phi}_{C^0}\epsilon^{2\beta}\epsilon^{\alpha-1}\norm{\rho}^2_{B_3^{\beta,\infty}}\norm{u}_{B_3^{\alpha,\infty}}+C\norm{\phi}_{C^1}\epsilon^{2\beta}\norm{\rho}^2_{B_3^{\beta,\infty}}\norm{u}_{B_3^{\alpha,\infty}}\to0.
\end{aligned}
\end{equation*}

% If $L>0$ is the Lipschitz constant of $p$ on the essential range of $\rho$, then we can estimate
%\begin{equation}
%\begin{aligned}
%\norm{p(\rho^\epsilon)-p(\rho)^\epsilon}_{L^3}&\leq \norm{p(\rho^\epsilon)-p(\rho^\epsilon)^\epsilon}_{L^3}+\norm{[p(\rho^\epsilon)-p(\rho)]^\epsilon}_{L^3}\\
%&\leq L\left\{\int_0^T\int_{\Tbb^d}\int_{-\epsilon}^\epsilon\int_{\Tbb^d}\eta^\epsilon(\tau,\xi)|\rho^\epsilon(t,x)-\rho^\epsilon(t-\tau,x-\xi)|^3d\xi d\tau dx dt\right\}^{1/3}\\
%&\hspace{1cm}+L\norm{\rho^\epsilon-\rho}_{L^3}\\
%&\leq2 L\epsilon^\beta\norm{\rho}_{B_3^{\beta,\infty}}.
%\end{aligned}
%\end{equation}
%Therefore,
%\begin{equation}
%\begin{aligned}
%|R_3^\epsilon|&=\left|\int_0^T\int_{\Tbb^d}\nabla(p(\rho^\epsilon)-p(\rho)^\epsilon)\cdot\phi u^\epsilon dxdt\right|\\
%&\leq \left|\int_0^T\int_{\Tbb^d}(p(\rho^\epsilon)-p(\rho)^\epsilon)\nabla\phi u^\epsilon dxdt\right|+\left|\int_0^T\int_{\Tbb^d}(p(\rho^\epsilon)-p(\rho)^\epsilon)\phi \diverg u^\epsilon dxdt\right|\\
%&\leq C\norm{\nabla \phi}_{L^3}\epsilon^\beta\norm{\rho}_{B_3^{\beta,\infty}}\norm{u}_{B_3^{\alpha,\infty}}+ C\norm{\phi}_{L^3}\epsilon^\beta\epsilon^{\alpha-1}\norm{\rho}_{B_3^{\beta,\infty}}\norm{u}_{B_3^{\alpha,\infty}}\to0
%\end{aligned}
%\end{equation}
%as $\epsilon\to0$.

Finally, let us estimate $\int\int\phi S^\epsilon dxdt$. We use~\eqref{pointwisedecomp} to split $S^\epsilon$ into two parts, so that we can estimate for the first part
\begin{equation*}
\begin{aligned}
&\left|\int_0^T\int_{\Tbb^d}\phi\diverg[(\rho^\epsilon-\rho)(u^\epsilon-u)]P'(\rho^\epsilon)dxdt\right|\\
&\leq\int_0^T\int_{\Tbb^d}|\nabla\phi(\rho^\epsilon-\rho)(u^\epsilon-u)P'(\rho^\epsilon)|dxdt+\int_0^T\int_{\Tbb^d}|\phi(\rho^\epsilon-\rho)(u^\epsilon-u)\cdot P''(\rho^\epsilon)\nabla\rho^\epsilon| dxdt\\
&\leq C\norm{\phi}_{C^1}\epsilon^\beta\epsilon^\alpha\norm{\rho}_{B_3^{\beta,\infty}}\norm{u}_{B_3^{\alpha,\infty}}+C\norm{\phi}_{C^0}\epsilon^\beta\epsilon^\alpha\epsilon^{\beta-1}\norm{\rho}^2_{B_3^{\beta,\infty}}\norm{u}_{B_3^{\alpha,\infty}}\to0
\end{aligned}
\end{equation*}
as $\epsilon\to0$. The second part is estimated similarly, thus completing the proof.

\qed

\subsection{Proof of Theorem~\ref{compressibleonsagerBV}}
\label{P}

%The compressible Euler system reads
%\begin{eqnarray} \label{P1}
%\partial_t \vr + \Div (\vr \vu) = 0,\\
% \label{P2}
%\partial_t (\vr \vu) + \Div (\vr \vu \otimes \vu) + \Grad p(\vr) = 0.
%\end{eqnarray}
%
%We consider solutions defined on the ``flat'' torus
%\[
%\Tbb^d= \left( [- \pi, \pi]|_{\{ -\pi, \pi \}} \right)^d, \ d = 1,2,3
%\]
%Accordingly, it follows from \eqref{compressible} that
%\[
%\vr \in C_{{\rm weak}}( [0,T], L^2(\T^d)), \ (\vr \vu) \in C_{{\rm weak}}( [0,T], L^2 (\T^d, \Rbb^{d})).
%\]

%\begin{theorem}\label{compressibleonsagerBV}
%Let $\rho$, $u$ be a solution of~\eqref{compressible} in the sense of distributions. Assume
%$u, \rho$ are strongly measurable viewed as functions from time to $BV \cap C(\T^d)$, and
%\begin{equation} \label{E1-thm}
%\sup_{t \in [0,T]} \| \vr (t, \cdot) \|_{BV \cap C(\T^d)} + \| \vu (t, \cdot) \|_{BV \cap C(\T^d; \Rbb^d)} \leq c.
%\end{equation}
%Then the energy is locally conserved, i.e.
%\begin{equation}
%\partial_t\left(\frac{1}{2}\rho|u|^2+P(\rho)\right)+\diverg\left[\left(\frac{1}{2}\rho|u|^2+p(\rho)+P(\rho)\right)u\right]=0
%\end{equation}
%in the sense of distributions on $(0,T)\times\Tbb^d$.
%\end{theorem}

\proofstep{Regularization}

Again we will regularize in space and time, but now with different parameters. Thus we consider the regularization by spatial convolution
\[
v^{\ep} (t, x)  = \int_{\T^d} \eta^\ep (x) v(t, x-y) \ {\rm d}y,
\]
where $\eta^\epsilon$ is as in Section~\ref{besov}, and,
following \cite{FrMaRu2010},
by time convolution
\[
v^h(t, x)= \int_0^T \chi_h (t - s) v(s, x) \ {\rm d}s;
\]
where
\[
\chi_h = \frac{1}{h} 1_{[-h,0]}.
\]
Note that $v^h$ enjoys lower time regularity than the standard regularization by smooth kernels. Specifically,
we identify
\begin{equation} \label{timeD}
\partial_t v^h = \frac{ v(t + h) - v(t)}{h} \in X
\end{equation}
provided $t$ and $t+h$ are Lebesgue points of a function $v \in L^1_{\rm loc} (0,T; X)$. In particular, the function $v^h$ is absolutely
continuous in $[h, T-h]$, with the derivative given by (\ref{timeD}) for a.a. $t \in (h, T-h)$.

We use the notation
\[
v^{\ep,h} = (v^\ep)^{h}.
\]
Regularizing  \eqref{compressible}$_1$ we get
\begin{equation} \label{R1}
\partial_t \vr^\ep  + \Div (\vr \vu)^\ep  = 0, \ \partial_t \vr^{\ep,h} + \Div (\vr \vu)^{\ep,h} = 0
\end{equation}
where the latter equation is satisfied in $(h,T-h)$.

Similarly we have
\begin{equation} \label{R2}
\partial_t (\vr \vu )^\ep  + \Div \left( (\vr \vu)^\ep \otimes \vu^\ep \right)  + \Grad (p(\vr))^\ep =
\Div \left( (\vr \vu)^\ep \otimes \vu^\ep \right) - \Div ( \vr \vu \otimes \vu )^\ep,
\end{equation}
and
\begin{equation} \label{R3}
\begin{split}
\partial_t \left( \vr^\ep \vu^\ep \right)^{h}  &+ \Div \left( (\vr \vu)^\ep \otimes \vu^\ep \right)^h  + \Grad (p(\vr))^{\ep,h} \\
&=
\left( \Div \left( (\vr \vu)^\ep \otimes \vu^\ep \right) - \Div \left(\vr \vu \otimes \vu \right)^{\ep} \right)^h
+ \partial_t \left(  \vr^\ep \vu^\ep  - (\vr \vu)^{\ep} \right)^h.
\end{split}
\end{equation}

\noindent
\proofstep{Total energy balance.}

We multiply (\ref{R3}) by $\varphi \vu^{\ep,h}$, where $\varphi \in \DC( (h,T-h) \times \T^d)$, and
integrate the resulting expression over $(0,T) \times \T^d$. Now, we proceed in several steps.

%\subsection{Time derivative}
To handle the term containing the  time derivative we
use the identity (\ref{timeD})
to obtain
\[
\begin{split}
\int_0^T \intT{ \partial_t &\left( \vr^\ep \vu^\ep \right)^h\cdot \vu ^{\ep,h}  \varphi } \dt
=\int_\Rbb \intT{ \frac{\vr^\ep(t + h) \vu^\ep(t+h) -  \vr^\ep (t)  \vu^\ep (t) }{h} \cdot\vu^{\ep,h}  \varphi }  \dt \\
&=\int_\Rbb \intT{ \frac{\vr^\ep(t + h) \vu^\ep(t+h) -  \vr^\ep (t)  \vu^\ep (t) }{h} \cdot\vu^{\ep,h}  \varphi }  \dt
\\
&\quad- \int_\Rbb \intT{ \vr^\ep \frac{ \vu^\ep(t+h) -    \vu^\ep (t) }{h} \cdot\vu^{\ep,h}  \varphi } \dt
+ \frac{1}{2} \int_\Rbb \intT{ \vr^\ep \partial_t( |\vu^{\ep,h} |^2) \varphi } \dt
\\
&= \int_\Rbb \intT{ \frac{\vr^\ep(t + h)  -  \vr^\ep (t)   }{h} \vu^\ep(t+h) \cdot\vu^{\ep,h}  \varphi }  \dt
+\frac{1}{2} \int_\Rbb \intT{ \vr^\ep \partial_t (|\vu^{\ep,h} |^2) \varphi } \dt
\\
&= \int_\Rbb \intT{ \partial_t \vr^{\ep,h} \vu^\ep(t+h) \cdot\vu^{\ep,h}  \varphi }  \dt+\frac{1}{2} \int_\Rbb \intT{ \vr^\ep \partial_t (|\vu^{\ep,h} |^2) \varphi } \dt .
\end{split}
\]

Thus using the regularized equation of continuity
\[
\begin{split}
\int_0^T& \intT{ \partial_t \left( \vr^\ep \vu^\ep \right)^h \cdot\vu^{\ep,h}  \varphi }  \dt
\\
&=
- \int_\Rbb \intT{ \Div ( \vr \vu)^{\ep,h} \vu^\ep(t+h) \cdot\vu^{\ep,h}  \varphi }  \dt+\frac{1}{2} \int_\Rbb \intT{ \vr^\ep \partial_t ( |\vu^{\ep,h} |^2) \varphi } \dt
\\
&=
\int_\Rbb \intT{ ( \vr \vu)^{\ep,h} \cdot \Grad \left( \vu^\ep(t+h) \cdot\vu^{\ep,h}  \varphi \right) }  \dt+\frac{1}{2} \int_\Rbb \intT{ \vr^\ep \partial_t (|\vu^{\ep,h} |^2) \varphi } \dt.
\end{split}
\]

Consequently, we may infer that
\begin{equation} \label{T1}
\begin{split}
\int_0^T &\intT{ \partial_t \left( \vr^\ep \vu^\ep \right)^h \cdot\vu^{\ep,h}  \varphi }  \dt
\\
&= \frac{1}{2} \int_\Rbb \intT{ \vr^\ep \partial_t (|\vu^{\ep,h} |^2) \varphi } \dt + \frac{1}{2} \int_\Rbb \intT{ \partial_t \left( \vr^\ep \varphi \right)
|\vu^{\ep,h} |^2  } \dt
\\
&\quad+ \int_\Rbb \intT{ (\vr \vu)^{\ep,h} \cdot \Grad \left( \vu^\ep(t+h) \cdot\vu^{\ep,h}  \varphi \right) }  \dt
- \frac{1}{2} \int_\Rbb \intT{ \partial_t \left( \vr^\ep \varphi \right)
|\vu^{\ep,h} |^2  } \dt
\\
&= \frac{1}{2} \int_\Rbb \intT{ \partial_t \left( \vr^\ep  |\vu^{\ep,h} |^2 \varphi  \right) } \dt
+ \int_\Rbb \intT{ (\vr \vu)^{\ep,h} \cdot \Grad \left( \vu^\ep(t+h) \cdot\vu^{\ep,h}  \varphi \right) }  \dt\\
&\quad- \frac{1}{2} \int_\Rbb \intT{ (\vr \vu)^\ep \Grad \left( \varphi |\vu^{\ep,h} |^2 \right) }  \dt
- \frac{1}{2} \int_\Rbb \intT{ \vr^\ep
|\vu^{\ep,h} |^2 \partial_t \varphi  } \dt
\\
&=
- \frac{1}{2} \int_\Rbb \intT{ \vr^\ep
|\vu^{\ep,h} |^2 \partial_t \varphi  } \dt
\\
&\quad+ \int_\Rbb \intT{ ( \vr \vu)^{\ep,h} \cdot \Grad \left( \vu^\ep(t+h) \cdot\vu^{\ep,h}  \varphi \right) } \dt
- \frac{1}{2} \int_\Rbb \intT{ (\vr \vu)^\ep \Grad \left( \varphi |\vu^{\ep,h} |^2 \right) } \dt.
\end{split}
\end{equation}

%\subsection{Convective term}

For further computations observe that the convective term reads
\[
\begin{split}
\int_0^T \intT{ \Div ((\vr \vu)^\ep \otimes \vu^\ep )^h \cdot \vu^{\ep, h} \varphi } \ \dt
= - \int_\Rbb \intT{ ( (\vr \vu)^\ep \otimes \vu^\ep )^h : \Grad \left(  \vu^{\ep, h} \varphi \right) } \ \dt.
\end{split}
\]

%\subsection{Pressure}

In accordance with hypothesis (\ref{pres2}), we may write the pressure as
\begin{equation} \label{pres3}
p(\vr) = p_\delta(\vr) + (p(\vr) - p_\delta(\vr)), \ \mbox{with}\ p_\delta \in C^2[0, \infty), \ p_\delta(0) = p_\delta'(0) = 0,
\ | p - p_\delta| < \delta
\end{equation}
for any $\delta > 0$.
Accordingly, we get
\begin{equation}\label{T2}
\begin{split}
\int_0^T \intT{& (\Grad p_\delta(\vr))^{\ep, h}  \vu ^{\ep, h} \varphi } dt
\\
&= - \int_\Rbb \intT{ ( p_\delta(\vr) )^{\ep,h}  \Div \vu^{\ep,h} \varphi}  \dt -
 \intT{ ( p_\delta(\vr) )^{\ep,h}   \vu^{\ep,h} \cdot \Grad \varphi}  \dt + \mathcal{O}(\delta)\| \varphi \|_{C^1}
\\
&= - \int_\Rbb \intT{ p_\delta ( \vr^{\ep,h} )  \Div \vu^{\ep,h} \varphi}  \dt -
 \int_\Rbb \intT{ ( p_\delta (\vr) )^{\ep,h}  \vu^{\ep,h} \cdot \Grad \varphi}  \dt
\\
&\quad+ \int_\Rbb \intT{ \left( p_\delta ( \vr^{\ep,h} ) - ( p_\delta (\vr) )^{\ep,h}   \right) \Div \vu^{\ep,h} \varphi}  \dt + \mathcal{O}(\delta)\| \varphi \|_{C^1}.
\end{split}
\end{equation}

Now, we rewrite the equation of continuity \eqref{compressible}$_1$ as
\[
\partial_t \vr^{\ep,h} + \Div \left( \vr^{\ep,h} \vu^{\ep,h} \right) = -\Div (\vr \vu)^{\ep,h} + \Div \left( \vr^{\ep,h} \vu^{\ep,h} \right);
\]
whence, after renormalization, 
\begin{equation} \label{T3}
\partial_t P_\delta ( \vr^{\ep,h} ) + \Div \left( P_\delta(\vr^{\ep,h}) \vu^{\ep,h} \right)
+ p_\delta ( \vr^{\ep,h} )  \Div \vu^{\ep,h} = -P'_\delta(\vr^{\ep,h}) \left[ \Div (\vr \vu)^{\ep,h} - \Div \left( \vr^{\ep,h} \vu^{\ep,h} \right)
\right].
\end{equation}
where
\[
P_\delta (\vr) = \vr \int_1^\vr \frac{p_\delta(z)}{z^2} dz.
\]
\color{black}
As a consequence of (\ref{pres3}), we have $P_\delta \in C^2[0, \infty)$.

Thus going back to (\ref{T2}) we conclude that
\begin{equation} \label{T4}
\begin{split}
\int_0^T \intT{ (\Grad p(\vr))^{\ep, h} & \vu^{\ep, h} \varphi } =
\int_0^T \intT{ \left[ \partial_t P_\delta ( \vr^{\ep,h} ) + \Div \left( P_\delta(\vr^{\ep,h}) \vu^{\ep,h} \right) \right] \varphi } \dt\\
&-
 \intT{ (p_\delta(\vr) )^{\ep,h}   \vu^{\ep,h} \cdot \Grad \varphi}  \dt
 \\
& + \int_\Rbb \intT{ \left( p_\delta ( \vr^{\ep,h} ) - (p_\delta(\vr))^{\ep,h}   \right) \Div \vu^{\ep,h} \varphi}  \dt\\
 &+ \int_\Rbb \intT{ P'_\delta(\vr^{\ep,h}) \left[ \Div (\vr \vu)^{\ep,h} - \Div \left( \vr^{\ep,h} \vu^{\ep,h} \right) \varphi \right] } \dt
 + \mathcal{O}(\delta) \| \varphi \|_{C^1}
\\
&=
- \int_\Rbb \intT{ P ( \vr^{\ep,h} ) \partial_t \varphi }  \dt - \int_0^T \intT{ P(\vr^{\ep,h}) \vu^{\ep,h} \cdot \Grad \varphi } \dt
\\
&-
 \intT{ ( p(\vr))^{\ep,h}   \vu^{\ep,h} \cdot \Grad \varphi}  \dt
\\
&  + \int_\Rbb \intT{ \left( p_\delta ( \vr^{\ep,h} ) - ( p_\delta(\vr) )^{\ep,h}   \right) \Div \vu^{\ep,h} \varphi}  \dt\\
& + \int_\Rbb \intT{ P'_\delta(\vr^{\ep,h}) \left[ \Div (\vr \vu)^{\ep,h} - \Div \left( \vr^{\ep,h} \vu^{\ep,h} \right) \varphi \right] } \dt
+ \mathcal{O}(\delta) \| \varphi \|_{C^1}.
\end{split}
\end{equation}
\color{black}

Thus summing up (\ref{T1}), (\ref{T2}) and (\ref{T4}) we obtain 
\[
\begin{split}
- \frac{1}{2} \int_\Rbb& \intT{ \vr^\ep
|\vu^{\ep,h} |^2 \partial_t \varphi  } \dt
\\
&+ \int_\Rbb \intT{ (\vr \vu)^{\ep,h} \cdot \Grad \left( \vu^\ep(t+h) \cdot\vu^{\ep,h}  \varphi \right) }  \dt
- \frac{1}{2} \int_\Rbb \intT{ (\vr \vu)^\ep \Grad \left( \varphi |\vu^{\ep,h} |^2 \right) } \dt
\\
&- \int_\Rbb \intT{ ( (\vr \vu)^\ep \otimes \vu^\ep )^h : \Grad \left( \vu^{\ep, h} \varphi \right) }  \dt
\\
&- \int_\Rbb \intT{ P ( \vr^{\ep,h} ) \partial_t \varphi }  \dt - \int_\Rbb \intT{ P(\vr^{\ep,h}) \vu^{\ep,h} \cdot \Grad \varphi } \dt
\\
&-
\int_\Rbb \intT{  (p(\vr))^{\ep,h}   \vu^{\ep,h} \cdot \Grad \varphi}  \dt
\\
&  + \int_\Rbb \intT{ \left( p_\delta ( \vr^{\ep,h} ) - ( p_\delta (\vr) )^{\ep,h}   \right) \Div \vu^{\ep,h} \varphi}  \dt\\
& + \int_\Rbb \intT{ P'_\delta(\vr^{\ep,h}) \left[ \Div (\vr \vu)^{\ep,h} - \Div \left( \vr^{\ep,h} \vu^{\ep,h} \right)  \right] \varphi }  \dt
 \\
& - \int_\Rbb \intT{ \Big( \left( \Div \left( (\vr \vu)^\ep \otimes \vu^\ep \right) - \Div \left( \vr \vu \otimes \vu \right)^{\ep} \right)^h
+ \partial_t \left(  \vr^\ep \vu^\ep  - (\vr \vu)^{\ep} \right)^h \Big) \cdot \vu^{\ep,h} \varphi } \dt= \mathcal{O}(\delta) \| \varphi \|_{C^1},
\end{split}
\]
\color{black}
which may be rewritten as
\begin{equation} \label{T5}
\begin{split}
\int_\Rbb& \intT{ \left( \frac{1}{2} \vr^\ep
\left|\vu^{\ep,h} \right|^2 +  P ( \vr^{\ep,h} )  \right) \partial_t \varphi     }  \dt  \\
&+ \int_\Rbb \intT{ \left( ( (\vr \vu)^\ep \otimes \vu^\ep )^h \cdot  \vu^{\ep, h} + \frac{1}{2} (\vr \vu)^\ep |\vu^{\ep,h} |^2 - (\vr \vu)^{\ep,h} \left( \vu^\ep(t+h) \cdot\vu^{\ep,h} \right) \right) \cdot \Grad \varphi }  \dt
\\
&+ \int_\Rbb \intT{ \Big( P(\vr^{\ep,h}) +  (p(\vr) )^{\ep,h} \Big)  \vu^{\ep,h} \cdot \Grad \varphi} \dt = \sum_{i=1}^{5} E^1_{\ep,h} + \mathcal{O}(\delta) \| \varphi \|_{C^1}
\end{split}
\end{equation}
where 
\[
\begin{split}
E^1_{\ep,h} =&  \int_\Rbb \intT{ (\vr \vu)^{\ep,h} \cdot \Grad \left( \vu^\ep(t+h) \cdot\vu^{\ep,h} \right)  \varphi  }  \dt
- \frac{1}{2} \int_\Rbb \intT{ (\vr \vu)^\ep \cdot \Grad \left( |\vu^{\ep,h} |^2 \right) \varphi } \dt
\\
&- \int_\Rbb \intT{ ( (\vr \vu)^\ep \otimes \vu^\ep )^h : \Grad \left(  \vu^{\ep, h}  \right) \varphi }  \dt
\end{split}
\]
\[
E^2_{\ep,h} = \int_\Rbb \intT{ \left( p_\delta ( \vr^{\ep,h} ) - ( p_\delta (\vr) )^{\ep,h}   \right) \Div \vu^{\ep,h} \varphi} \dt
\]
\[
E^3_{\ep,h} =  \int_\Rbb \intT{ P'_\delta(\vr^{\ep,h}) \left[ \Div (\vr \vu)^{\ep,h} - \Div \left( \vr^{\ep,h} \vu^{\ep,h} \right) \right] \varphi } \dt
\]
\[
E^4_{\ep,h} = -\int_\Rbb \intT{ \Big( \left( \Div \left( (\vr \vu)^\ep \otimes \vu^\ep \right) - \Div \left( \vr \vu \otimes \vu \right)^{\ep} \right)^h
\Big) \cdot \vu^{\ep,h} \varphi }  \dt
\]
and
\[
E^5_{\ep,h} =- \int_\Rbb \intT{ \Big( \partial_t \left(  \vr^\ep \vu^\ep  - (\vr \vu)^{\ep} \right)^h \Big) \cdot \vu^{\ep,h} \varphi }\dt.
\]
\color{black}

\noindent
\proofstep{Estimating the errors}
\label{E}

We perform the limit \emph{first} $\ep \to 0$ and \emph{second} $h \to 0$. We start with the last integral $E_5$ rewriting it as
\[
E^5_{\ep,h} =  \int_\Rbb \intT{ \Big( \left( \vr^\ep \vu^\ep  - (\vr \vu)^{\ep} \right)^h \Big) \cdot \partial_t (\vu^{\ep,h} \varphi) } \dt.
\]
For $h$ fixed, the term $\partial_t (\vu^{\ep,h} \varphi)$ remains uniformly bounded as $\vu$ belongs to $L^\infty$. On the other hand,
by the same token
\[
\vr^\ep \vu^\ep  - (\vr \vu)^{\ep} \to 0 \ \mbox{in}\ L^p((0,T) \times \T^d) \ \mbox{for any}\ 1 \leq p < \infty;
\]
whence $E^5_{\ep,h}$ vanishes for $\ep \to 0$.

The next step is to rewrite $E^1_{\ep,h}$ as
\[
\begin{split}
E^1_{\ep,h} =  &\int_\Rbb \intT{ (\vr \vu)^{\ep,h} (t-h) \cdot \Grad  \vu^{\ep} \cdot\vu^{\ep,h}(t-h)   \varphi(t-h)  }  \dt
\\
&+ \int_\Rbb \intT{ (\vr \vu)^{\ep,h} \cdot  \vu^\ep(t+h) \cdot  \Grad\vu^{\ep,h}   \varphi  }  \dt
\\
&- \int_\Rbb \intT{ (\vr \vu)^\ep \cdot \Grad \vu^{\ep,h} \cdot \vu^{\ep,h}  \varphi } \dt
\\
&- \int_\Rbb \intT{ ( (\vr \vu)^\ep \otimes \vu^\ep )^h : \Grad   \vu^{\ep, h}   \varphi }  \dt
\end{split}
\]

At this stage we invoke the assumption that 
\begin{equation*}
{\rm ess} \sup_{t \in [0,T]} (\| \vr (t, \cdot) \|_{BV \cap C(\T^d)} + \| \vu (t, \cdot) \|_{BV \cap C(\T^d; \Rbb^N)})<\infty.
\end{equation*}
\color{black}
Consequently,
\begin{align*}
(\vr \vu)^{\ep, h}& \to (\vr \vu)^h &&\mbox{in}\ \ C([h,T-h] \times \T^d ; \Rbb^d), \\
 \vu^{\ep, h} &\to \vu^h &&\mbox{in}\  \ C([h,T-h] \times \T^d ; \Rbb^d)
\ \mbox{as}\ \ep \to 0,\\
\Grad \vu^\ep &\to \Grad \vu &&\mbox{in}\ \ L^p_{\rm weak-(*)} (0,T; \mathcal{M}(\T^d; \Rbb^{d \times d})) \ \mbox{as}\ \ep \to 0, \\
\Grad \vu^{\ep,h} &\to \Grad \vu^h &&\mbox{in}\ \ C_{\rm weak-(*)} ([h,T-h]; \mathcal{M}(\T^d; \Rbb^{d \times d})) \ \mbox{as}\ \ep \to 0,\\
(\vr \vu)^\ep &\to \vr \vu &&\mbox{in}\ \ L^p(0,T; C(\T^d; \Rbb^d)), \\
 \vu^\ep &\to \vu &&\mbox{in}\ \ L^p(0,T; C(\T^d; \Rbb^d))
\ \mbox{for any}\ 1 \leq p < \infty \ \mbox{as}\ \ep \to 0.
\end{align*}
\color{black}
Thus we may conclude that
\[
\begin{split}
E^1_{\ep,h} \to E^1_h = & \int_\Rbb \intT{ ( \vr \vu)^h (t-h) \cdot \Grad  \vu \cdot \vu^h(t-h)   \varphi(t-h)  }  \dt
\\
&+ \int_\Rbb \intT{ ( \vr \vu)^h \cdot  \vu(t + h) \cdot  \Grad \vu ^h   \varphi  }  \dt
\\
&- \int_\Rbb \intT{(\vr \vu) \cdot \Grad  \vu^h \cdot \vu^h  \varphi }  \dt
\\
&- \int_\Rbb \intT{ ( \vr \vu \otimes \vu )^h : \Grad  \vu^h   \varphi }  \dt
\end{split}
\]
as $\ep \to 0$.

Now, observe that
\[
v^h \to v \ \mbox{in}\ L^p(0,T; X) \ \mbox{for}\ v \in L^\infty(0,T; X) \ \mbox{and any}\ 1 \leq p < \infty,
\]
and
\[
v(\cdot +h) - v \to 0 \ \mbox{in} \ L^p(0,T; X) \ \mbox{as}\ h \to 0 \ \mbox{for} \ v \in L^\infty(0,T; X)\ \mbox{and any}\ 1 \leq p < \infty,
\]
where $X$ is a Banach space. Thus we may infer that that $E^1_h \to 0$ as $h \to 0$.

It is easy to observe that the limits in $E^2_{\ep,h}$, $E^4_{\ep,h}$ can be performed in the same way.

Finally, we write $E^3_{\ep,h}$ as 
\[
E^3_{\ep,h} =- \int_\Rbb \intT{ \Grad (P'_\delta (\vr^{\ep,h}) \varphi) \left[ (\vr \vu)^{\ep,h} - \left( \vr^{\ep,h} \vu^{\ep,h} \right) \right]  }  \dt,
\]
where
\[
\Grad P'_\delta(\vr^{\ep,h} )= P''_\delta(\vr^{\ep,h}) \Grad \vr^{\ep,h},
\]
\color{black}
and apply the same arguments to conclude. As $\delta > 0$ in (\ref{T5}) can be taken arbitrarily small, the desired conclusion follows.
\qed

%\bibliographystyle{abbrv}
%\bibliography{onsager}

\begin{thebibliography}{10}

\bibitem{BuDLIsSz2015}
T.~Buckmaster, C.~De~Lellis, P.~Isett, and L.~Sz{\'e}kelyhidi, Jr.
\newblock Anomalous dissipation for 1/5-{H}\"older {E}uler flows.
\newblock {\em Ann. of Math. (2)}, 182(1):127--172, 2015.

\bibitem{BuDLSz2015}
T.~Buckmaster, C.~De~Lellis, and L.~Sz\'ekelyhidi, Jr.
\newblock Dissipative {E}uler flows with {O}nsager-critical spatial regularity.
\newblock {\em Comm. Pure and Appl. Math.}, 2015.

\bibitem{ChCoFrSh2008}
A.~Cheskidov, P.~Constantin, S.~Friedlander, and R.~Shvydkoy.
\newblock Energy conservation and {O}nsager's conjecture for the {E}uler
  equations.
\newblock {\em Nonlinearity}, 21(6):1233--1252, 2008.

\bibitem{constetiti}
P.~Constantin, W.~E, and E.~S. Titi.
\newblock Onsager's conjecture on the energy conservation for solutions of
  {E}uler's equation.
\newblock {\em Comm. Math. Phys.}, 165(1):207--209, 1994.

\bibitem{Da2010}
R.~Danchin.
\newblock On the well-posedness of the incompressible density-dependent {E}uler
  equations in the {$L^p$} framework.
\newblock {\em J. Differential Equations}, 248(8):2130--2170, 2010.

\bibitem{DaFa2011}
R.~Danchin and F.~Fanelli.
\newblock The well-posedness issue for the density-dependent {E}uler equations
  in endpoint {B}esov spaces.
\newblock {\em J. Math. Pures Appl. (9)}, 96(3):253--278, 2011.

\bibitem{euler2}
C.~De~Lellis and L.~Sz{\'e}kelyhidi, Jr.
\newblock On admissibility criteria for weak solutions of the {E}uler
  equations.
\newblock {\em Arch. Ration. Mech. Anal.}, 195(1):225--260, 2010.

\bibitem{Ey1994}
G.~L. Eyink.
\newblock Energy dissipation without viscosity in ideal hydrodynamics. {I}.
  {F}ourier analysis and local energy transfer.
\newblock {\em Phys. D}, 78(3-4):222--240, 1994.

\bibitem{FeLiMa2015}
E.~Feireisl, X.~Liao, and J.~M{\'a}lek.
\newblock Global weak solutions to a class of non-{N}ewtonian compressible
  fluids.
\newblock {\em Math. Methods Appl. Sci.}, 38(16):3482--3494, 2015.

\bibitem{FrMaRu2010}
J.~Frehse, J.~M\'alek, and M.~R{{u}}{\v{z}}i{\v{c}}ka.
\newblock Large data existence result for unsteady flows of inhomogeneous
  shear-thickening heat-conducting incompressible fluids.
\newblock {\em Comm. Partial Differential Equations}, 35(10):1891--1919, 2010.

\bibitem{TrSh2016}
T.~M. Leslie and R.~Shvydkoy.
\newblock The energy balance relation for weak solutions of the
  density-dependent {N}avier-{S}tokes equations.
\newblock {\em ar{X}iv:1602.08527v1}, 2016.

\bibitem{Ma1976}
J.~E. Marsden.
\newblock Well-posedness of the equations of a non-homogeneous perfect fluid.
\newblock {\em Comm. Partial Differential Equations}, 1(3):215--230, 1976.

\bibitem{On1949}
L.~Onsager.
\newblock Statistical hydrodynamics.
\newblock {\em Nuovo Cimento (9)}, 6(Supplemento, 2 (Convegno Internazionale di
  Meccanica Statistica)):279--287, 1949.

\bibitem{scheffer}
V.~Scheffer.
\newblock An inviscid flow with compact support in space-time.
\newblock {\em J. Geom. Anal.}, 3(4):343--401, 1993.

\bibitem{shnirel}
A.~Shnirelman.
\newblock Weak solutions with decreasing energy of incompressible {E}uler
  equations.
\newblock {\em Comm. Math. Phys.}, 210(3):541--603, 2000.

\bibitem{Wr2013}
A.~Wr{\'o}blewska-Kami{\'n}ska.
\newblock Unsteady flows of non-{N}ewtonian fluids in generalized {O}rlicz
  spaces.
\newblock {\em Discrete Contin. Dyn. Syst.}, 33(6):2565--2592, 2013.

\end{thebibliography}
\end{document}